\long\def\symbolfootnote[#1]#2{\begingroup%
\def\thefootnote{\fnsymbol{footnote}}\footnote[#1]{#2}\endgroup}
\newtheorem{theorem}{Theorem}
\newtheorem{proposition}[theorem]{Proposition}
\newtheorem{corollary}[theorem]{Corollary}
\newtheorem{lemma}[theorem]{Lemma}
\newtheorem{conjecture}{Conjecture}
\newtheorem{remark}{Remark}
\newtheorem*{remark*}{Remark}
\theoremstyle{definition}
\newtheorem*{namedtheorem}{\theoremname}
\newcommand{\theoremname}{testing}
\newcommand{\R}{\mathbb{R}}
\newcommand{\C}{\mathbb{C}}
\newcommand{\N}{\mathbb{N}}
\newcommand{\Z}{\mathbb{Z}}
\newcommand{\Q}{\mathbb{Q}}
\def\Aut{\operatorname{Aut}}
\def\SO{\operatorname{SO}}
\def\Out{\operatorname{Out}}
\def\Mod{\operatorname{Mod}}
\def\proj{\operatorname{proj}}
\newenvironment{dedication}
  {\thispagestyle{empty}
   \vspace*{\stretch{1}}
   \itshape             
   \raggedleft          
  }
  {\par 
   \vspace{\stretch{3}} 
  }
\begin{document}

\title[]{Quotients of the mapping class group by power subgroups}
\author{Javier Aramayona}
\address{Universidad Aut\'onoma de Madrid \& ICMAT \\ C. U. de Cantoblanco. 28049, Madrid, Spain}
 
 \author{Louis Funar} 
 \address{ Institut Fourier, UMR 5582, Laboratoire de Math\'ematiques \\
Universit\'e Grenoble Alpes, CS 40700, 38058 Grenoble cedex 9, France}

\date{\today}
\thanks{The first author was partially funded by grants RYC-2013-13008 and MTM2015-67781. }
\subjclass{20F65, 57M50, 22E40, 32Q15}

%

\begin{abstract}
We study the quotient of the mapping class group $\Mod_g^n$ of a surface of genus $g$ with $n$ punctures, by the subgroup $\Mod_g^n[p]$ generated by the $p$-th powers of Dehn twists. 

Our first main result is that $\Mod_g^1 /\Mod_g^1[p]$ contains an infinite normal subgroup of infinite index, and in particular is not commensurable to a higher-rank lattice, for all but finitely many explicit values of $p$. 
Next, we prove that $\Mod_g^0/ \Mod_g^0[p]$ contains a K\"ahler subgroup of finite index, for every $p\ge 2$ coprime with six.
Finally, we observe that the existence of finite-index subgroups of $\Mod_g^0$ with infinite abelianization is equivalent to the analogous problem for  $\Mod_g^0/ \Mod_g^0[p]$.
\end{abstract}

\maketitle

\begin{dedication}
Dedicated to the memory of \c{S}tefan Papadima (1953-2018)
\end{dedication}

\section{Introduction and statements}

Throughout, $S_{g}^{n}$ denotes the connected orientable surface of genus $g\ge 2$, with empty boundary and $n\ge 0$ marked points. Associated to $S_g^n$ is the mapping class group $\Mod_g^n$, namely the group of  homeomorphisms of $S_g^n$ up to isotopy. In the case when $n=0$, we will drop it from the above notation and write $S_g$ and $\Mod_g$. 

There is widespread interest in studying homomorphisms from mapping class groups to compact Lie groups, notably through the so-called {\em quantum representations} of mapping class groups. As it turns out, if  $G$ is a compact Lie group and $\Mod_g^n \to G$ is a homomorphism (with $g\ge 3$) then the image of every Dehn twist has finite order 
\cite[Corollary 2.6]{AS-rig}. With this motivation in mind, it is natural to study the structure of the group \[\frac{\Mod_g^n}{\Mod_g^n[p]},\] where $\Mod_g^n[p]$ denotes the (normal) subgroup of $\Mod_g^n$ generated by $p$-powers of Dehn twists. We remark that  $\Mod_g^n/\Mod_g^n[p]$ is known to be infinite if $g=2$ and $p\ge 4$ \cite{Humphries}, or if $g\ge 3$ and $p\notin \{1,2,3,4,6,8,12\}$ \cite{F}. 

\subsection{Non-lattice properties.}

In \cite{FP},  Pitsch and the second author  used quantum topological techniques to prove that $\Mod_g/\Mod_g[p]$ is not commensurable to a higher-rank lattice whenever $g\ge 4$ and $p\ge 2g-1$. The first purpose of this note is to give the following uniform version of this result, in the case of once-punctured surfaces: 

\begin{theorem}
Let $g,p \ge 2$, where $p \ge 4$ if $g=2$, and $p\notin \{2,3,4,6,8,12\}$ if $g\ge 3$.
Then \[\frac{\Mod_g^1}{\Mod_g^1[p]}\] has a descending normal series $Q_1 \trianglerighteq Q_2\trianglerighteq \ldots$  such that $Q_{i+1}$ has infinite index in $Q_i$ for all $i\ge 1$.
\label{thm:main}
\end{theorem}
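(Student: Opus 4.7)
The plan is to use the Birman exact sequence to obtain the first step of the descending series from the already known infiniteness of $\Mod_g/\Mod_g[p]$, and then to iterate using characteristic subgroups of $\pi_1(S_g)$, which are normal in $\Mod_g^1$ via the Birman embedding.

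Concretely, I would first invoke the Birman exact sequence
\[
1 \to \pi_1(S_g) \to \Mod_g^1 \to \Mod_g \to 1.
\]
Since this projection sends $p$-th powers of Dehn twists to $p$-th powers of Dehn twists, $\Mod_g^1[p]$ is carried into $\Mod_g[p]$, giving an induced surjection
\[
\phi \colon Q_1 := \Mod_g^1/\Mod_g^1[p] \twoheadrightarrow \Mod_g/\Mod_g[p],
\]
whose kernel $K$ is the image of $\pi_1(S_g)$. By the Humphries/Funar results quoted in the introduction, the right-hand group is infinite under our hypotheses on $(g,p)$, so setting $Q_2 := K$ yields $[Q_1:Q_2]=\infty$. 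For the iteration, note that since the conjugation action of $\Mod_g^1$ on $\pi_1(S_g)$ (via Birman) is by automorphisms, every characteristic subgroup $\Lambda\le\pi_1(S_g)$ is normal in $\Mod_g^1$, and hence its image in $Q_1$ is normal in $Q_1$. One then chooses a descending chain $\pi_1(S_g)\supset\Lambda_1\supset\Lambda_2\supset\cdots$ of characteristic subgroups and sets $Q_{i+2}:=\mathrm{Im}(\Lambda_i)\subseteq K$; by construction each $Q_{i+1}$ is normal in $Q_i$.

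The main difficulty is to ensure that the successive quotients $Q_i/Q_{i+1}$ remain infinite. A crucial observation here is that the point push of a simple loop $\gamma\in\pi_1(S_g)$ is the bounding-pair map $T_{\gamma^+}T_{\gamma^-}^{-1}$, whose $p$-th power $T_{\gamma^+}^pT_{\gamma^-}^{-p}$ already lies in $\Mod_g^1[p]$; hence every simple loop becomes $p$-torsion in $K$. Since simple loops generate $H_1(S_g;\Z)$, this forces the abelianization $K^{\mathrm{ab}}$ to be a quotient of $(\Z/p)^{2g}$, in particular finite. So the two naive candidates -- the lower central and derived filtrations of $\pi_1(S_g)$ -- collapse at the very first step of the iteration. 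The hard part will therefore be to find characteristic subgroups $\Lambda_i$ whose images survive the $p$-power relations and still have infinite successive indices in $K$. A natural candidate is a filtration arising from an infinite-image unitary (quantum) representation of $\Mod_g^1$ which, under our hypotheses on $p$, factors through $Q_1$ with large image; within such a representation one can extract a chain of preimages of closed subgroups producing infinite successive quotients. Verifying that the resulting images in $K$ remain strictly descending of infinite successive index, despite the $p$-power simple-loop relations, is the heart of the argument.
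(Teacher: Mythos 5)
Your reduction to the point-pushing subgroup $K\cong \pi_1(S_g)/\bigl(\pi_1(S_g)\cap\Mod_g^1[p]\bigr)$ is sound as far as it goes, and your diagnosis that the lower central and derived series of $\pi_1(S_g)$ collapse in $K$ (because point-pushes of simple loops are bounding-pair maps whose $p$-th powers lie in $\Mod_g^1[p]$, so $K^{\mathrm{ab}}$ is a quotient of $(\Z/p)^{2g}$) is correct and consistent with the paper's generalized Birman sequence. But the proposal stops exactly where the content of the theorem begins: you do not construct the subgroups $\Lambda_i$, you do not verify that $K$ itself is infinite (without which the series cannot even start), and you explicitly defer ``the heart of the argument.'' As written this is a plan with the essential step missing, not a proof.

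The paper closes this gap with a covering trick rather than a pre-chosen filtration by characteristic subgroups. Starting from an element $h_1\in\pi_1(S_g)$ of infinite order in $\Mod_g^1/\Mod_g^1[p]$ (the Koberda--Santharoubane / Funar--Lochak input, which is also what makes $K$ infinite), Scott's theorem provides a finite characteristic cover $S_{g'}\to S_g$ on which a power $h=h_1^n$ becomes a simple loop. Restriction to $\pi_1(S_{g'})$ gives an injection $\phi:\Mod_g^1\hookrightarrow\Mod_{g'}^1$ under which $\phi(h)$ is a multitwist, so $\phi(h^p)\in\ker(\proj^p_{g'})$; hence $\proj^p_{g'}\circ\phi$ factors through $(\Mod_g^1/\Mod_g^1[p])/\langle\langle\proj_g^p(h^p)\rangle\rangle$, and since this composite still has infinite image (Koberda--Santharoubane applied to the cover), the normal closure $\langle\langle\proj_g^p(h^p)\rangle\rangle$ is simultaneously infinite and of infinite index. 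Iterating on elements $h_2,h_3,\dots$ that survive in the successive quotients produces the whole series. The key point your proposal lacks is this external certification of infinite index via a homomorphism to the twist-power quotient of a \emph{larger} mapping class group; the subgroups are normal closures of single elements determined along the way, not images of a filtration of $\pi_1(S_g)$ fixed in advance. Your alternative suggestion of extracting a chain from a single quantum representation would additionally require that representation to admit an infinite descending chain of subgroups pulling back to normal subgroups of $K$ with infinite successive quotients, which you have not justified.
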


The groups $Q_i$ in the statement of Theorem \ref{thm:main} are obtained through a covering construction, similar in spirit to that of the ``non-geometric'' injective homomorphism between mapping class groups of \cite{ALS}.

In light of Margulis' Normal Subgroup Theorem \cite{Margulis}, Theorem \ref{thm:main} has the following immediate consequence: 

\begin{corollary}
Let $g,p \ge 2$, where $p \ge 4$ if $g=2$, and $p\notin \{2,3,4,6,8,12\}$ if $g\ge 3$. Then $\Mod_g^1/\Mod_g^1[p]$ is not commensurable to a lattice in a semisimple Lie group of real rank $\ge 2$.
\label{cor:main}
\end{corollary}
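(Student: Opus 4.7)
The plan is to apply Margulis' Normal Subgroup Theorem (NST) directly to the descending series produced by Theorem \ref{thm:main}. Assume, for contradiction, that $\Gamma := \Mod_g^1/\Mod_g^1[p]$ is commensurable to a lattice $\Lambda$ in a semisimple Lie group $G$ of real rank at least $2$, and fix an isomorphism $\varphi \colon H \to H'$ between finite-index subgroups $H \le \Gamma$ and $H' \le \Lambda$. Since a finite-index subgroup of a lattice is again a lattice, $H'$ is still a lattice in $G$.

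The first step is to intersect the series $\Gamma = Q_1 \trianglerighteq Q_2 \trianglerighteq \cdots$ with $H$. Writing $N_i := Q_i \cap H$, each $N_i$ is normal in $H$ because $Q_i \trianglelefteq \Gamma \supseteq H$. Since $[Q_i : N_i] \le [\Gamma : H] < \infty$, the subgroup $N_i$ has finite index in $Q_i$, so $[N_i : N_{i+1}] = \infty$ inherits from $[Q_i : Q_{i+1}] = \infty$. In particular, $N_2$ is an infinite normal subgroup of $H$ (it contains $N_3$ of infinite index in it) of infinite index in $H$. Transporting $N_2$ through $\varphi$ yields an infinite, infinite-index normal subgroup of the lattice $H'$.

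When $\Lambda$ is irreducible, this immediately contradicts Margulis NST, which asserts that every normal subgroup of an irreducible lattice in a higher-rank semisimple Lie group is either finite or of finite index. The step I expect to require the most care is the reducible case: if $\Lambda$ is virtually a product $\Lambda_{(1)} \times \cdots \times \Lambda_{(k)}$ of irreducible lattices, one applies NST factor-wise to see that, at least when each factor satisfies NST, the commensurability classes of normal subgroups of $\Lambda$ are parametrized by subsets of $\{1, \ldots, k\}$. The full infinite chain $N_2 \supsetneq N_3 \supsetneq \cdots$, whose terms are pairwise non-commensurable normal subgroups of $H$ (since $[N_i : N_j] = \infty$ for $i < j$), cannot be accommodated by such a finite collection, again producing the desired contradiction. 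Rank-one simple factors are the only genuinely delicate point; one would expect to handle them with an auxiliary property-$(T)$ argument applied to the higher-rank part together with the observation that the image of $\Gamma$ in the rank-one factor still inherits an infinite descending normal series, which is incompatible with the structure of normal subgroups of rank-one arithmetic lattices appearing inside a reducible higher-rank lattice.
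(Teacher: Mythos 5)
Your treatment of the irreducible case is exactly the deduction the paper intends: the paper offers nothing beyond ``in light of Margulis' Normal Subgroup Theorem, this is immediate,'' and your reduction --- intersect the chain from Theorem \ref{thm:main} with the finite-index subgroup $H$, check that $N_2=Q_2\cap H$ is still infinite, normal and of infinite index in $H$, transport it through $\varphi$ to the lattice $H'$, and invoke the NST --- is the right way to fill that in, and your index computations are correct. (One cosmetic remark: as actually constructed in the paper's proof of Theorem \ref{thm:main} the chain is ascending, $Q_1\subseteq Q_2\subseteq\cdots$, despite the $\trianglerighteq$ notation; this does not affect your argument.)

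The genuine gap is in the reducible case. Your claim that the commensurability classes of normal subgroups of a product $\Lambda_{(1)}\times\cdots\times\Lambda_{(k)}$ of irreducible lattices are parametrized by subsets of $\{1,\dots,k\}$ is valid only when every factor is itself higher-rank, so that the NST applies factor-wise; but a reducible lattice in a semisimple group of real rank $\ge 2$ may well have rank-one factors (e.g.\ $F_2\times F_2$ in $\mathrm{PSL}_2(\R)\times\mathrm{PSL}_2(\R)$). Your proposed repair for those factors does not work: rank-one lattices are not ``incompatible'' with an infinite normal chain with infinite gaps --- the derived series of a free or surface lattice is already such a chain, so subgroups of the form $F_2^{(i)}\times 1$ realize precisely the structure you are trying to exclude inside a reducible higher-rank lattice, and no property-$(T)$ argument on the higher-rank part detects this. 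To be fair, the paper does not address this either: it tacitly reads ``lattice in a semisimple Lie group of real rank $\ge 2$'' as \emph{irreducible} lattice (cf.\ ``higher-rank lattice'' in the abstract and in the discussion of \cite{FP}), which is the only setting in which the corollary is genuinely immediate from the NST. So either restrict the statement to irreducible lattices, as the paper implicitly does, or supply a new argument ruling out virtual product decompositions of $\Mod_g^1/\Mod_g^1[p]$ with a rank-one factor; your sketch does not accomplish the latter.
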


Next, we will give the following analogue of the {\em Birman short exact sequence} for the groups $\Mod_g^1/\Mod_g^1[p]$. Below, $\pi_1(S_g)[p]$ denotes the subgroup of $\pi_1(S_g)$ generated by $p$-powers of {\em simple} loops, that is, loops without transverse self-intersection.

\begin{proposition}\label{thm:surface}
 Let $g,p \ge 2$. There is an exact sequence: 
\[ 1\to A\to \frac{\pi_1(S_g)}{\pi_1(S_g)[p]}\to \frac{\Mod_g^1}{\Mod_{g}^1[p]}\to \frac{\Mod_{g}}{\Mod_{g}[p]}\to 1\]
where $A$ is central, namely $A\subseteq Z\left(\frac{\pi_1(S_g)}{\pi_1(S_g)[p]}\right)$.
\end{proposition}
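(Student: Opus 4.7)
The plan is to derive the desired sequence by taking quotients in the classical Birman exact sequence
\[
1 \to \pi_1(S_g) \to \Mod_g^1 \to \Mod_g \to 1,
\]
whose left map is the point-pushing homomorphism and whose right map is the forgetful map. The forgetful map takes Dehn twists to Dehn twists, and hence descends to a surjection $\Mod_g^1/\Mod_g^1[p] \twoheadrightarrow \Mod_g/\Mod_g[p]$. Moreover every essential simple closed curve on $S_g$ lifts to one on $S_g^1$, so $\Mod_g^1[p]$ already surjects onto $\Mod_g[p]$; this observation is what will allow us to describe the kernel of the right map correctly.

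The key geometric ingredient comes next: for any simple based loop $\gamma$, one has the classical identity
\[
\operatorname{Push}(\gamma) = T_{\gamma_R}\, T_{\gamma_L}^{-1},
\]
where $\gamma_R,\gamma_L \subset S_g^1$ are the two disjoint boundary circles of a regular neighborhood of $\gamma$. Since these Dehn twists commute, raising to the $p$-th power gives $\operatorname{Push}(\gamma)^p = T_{\gamma_R}^{p}\, T_{\gamma_L}^{-p} \in \Mod_g^1[p]$. Hence point-pushing sends $\pi_1(S_g)[p]$ into $\Mod_g^1[p]$, inducing the desired left map $\pi_1(S_g)/\pi_1(S_g)[p] \to \Mod_g^1/\Mod_g^1[p]$. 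A direct diagram chase using the Birman sequence together with the surjectivity $\Mod_g^1[p] \twoheadrightarrow \Mod_g[p]$ noted above then identifies the image of this left map with the kernel of the right map, yielding exactness at the middle term.

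The main obstacle is the injectivity of the left map, which amounts to showing the inclusion
\[
\pi_1(S_g) \cap \Mod_g^1[p] \subseteq \pi_1(S_g)[p]
\]
inside $\Mod_g^1$. To attack this I would analyze the conjugation action of $\Mod_g^1$ on its normal subgroup $\pi_1(S_g)$. A direct computation, using that a Dehn twist along $c$ acts on a loop by inserting a conjugate of $c$ at each transverse crossing, shows that $T_c^p$ multiplies any $\alpha\in\pi_1(S_g)$ by a product of conjugates of $c^p$, hence acts trivially on $\pi_1(S_g)/\pi_1(S_g)[p]$. Consequently every element of $\Mod_g^1[p]$ acts trivially on that quotient, so any $\gamma\in\pi_1(S_g)\cap \Mod_g^1[p]$ must act trivially by inner conjugation, i.e.\ $[\gamma]$ lies in the center of $\pi_1(S_g)/\pi_1(S_g)[p]$. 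The technical heart of the argument is then to verify that this center is trivial for $g\ge 2$, perhaps by examining a suitable finite or infinite cover of $S_g$ associated to $\pi_1(S_g)[p]$.
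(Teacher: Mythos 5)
Your overall strategy coincides with the paper's. The two inclusions you isolate are exactly the hypotheses \eqref{pi} and \eqref{cong} of the paper's Lemma~\ref{lem:exactness}: that point-pushing sends $\pi_1(S_g)[p]$ into $\Mod_g^1[p]$ (via the identity $T_{\gamma_R}T_{\gamma_L}^{-1}$ for simple loops), and that $\Mod_g^1[p]$ acts trivially on $\pi_1(S_g)/\pi_1(S_g)[p]$ (the paper's Lemma~\ref{lem:dehn}, proved by essentially the same twist computation you sketch, phrased there via the amalgam/HNN splittings of $\pi_1(S_g)$ along $c$). Your treatment of surjectivity on the right and exactness at the middle term is correct.

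The problem is the final step. You correctly reduce injectivity of the left-hand map to the statement that any $\gamma\in\pi_1(S_g)\cap\Mod_g^1[p]$ determines a \emph{central} element of $\pi_1(S_g)/\pi_1(S_g)[p]$, and then you must show that this central element is trivial. You explicitly leave the triviality of the center of $\pi_1(S_g)/\pi_1(S_g)[p]$ unproved, offering only ``perhaps by examining a suitable finite or infinite cover.'' That is a genuine gap, and it is not a formality: nothing elementary rules out a nontrivial center here --- for small $p$ this quotient could a priori be a nontrivial finite nilpotent group, in which case the center is automatically nontrivial --- so the step you call the technical heart is precisely the part that needs an argument. The paper closes this step differently: from \eqref{pi} it builds a surjection $\alpha:\pi_1(S_g)/\Pi\to\pi_1(S_g)/(\pi_1(S_g)\cap K^1)$ and shows that composing $\alpha$ with the conjugation action $\psi$ recovers the identity of $\pi_1(S_g)/\Pi$, forcing $\alpha$ to be an isomorphism and hence $\Pi=\pi_1(S_g)\cap K^1$. (That diagram chase relies on the exactness of $1\to G\to\Aut(G)\to\Out(G)\to 1$ for $G=\pi_1(S_g)/\Pi$, so it is ultimately cousin to your centerlessness reduction; but it is a complete written argument, whereas your proposal stops exactly where the difficulty begins.) To repair your write-up you should either supply a proof that the relevant central classes vanish, or replace the last step by the factorization argument of Lemma~\ref{lem:exactness}.
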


Combining Proposition \ref{thm:surface} with the construction of the groups $Q_i$ of Theorem \ref{thm:main}, we will immediately obtain:

\begin{corollary}\label{cor:surfintermediary}
Let $g,p \ge 2$, where $p \ge 4$ if $g=2$ and $p\notin \{2,3,4,6,8,12\}$ if $g\ge 3$.  Then \[\frac{\pi_1(S_g)}{\pi_1(S_g)[p]}\] is not commensurable to a lattice in a semisimple Lie group of real rank $\ge 2$.
\label{cor:nonlatticepi}
\end{corollary}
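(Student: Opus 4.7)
The plan is to mimic the strategy used for Corollary~\ref{cor:main}: produce a descending normal series $N_1 \trianglerighteq N_2 \trianglerighteq \ldots$ inside $N := \pi_1(S_g)/\pi_1(S_g)[p]$ with each $[N_i : N_{i+1}] = \infty$, and then invoke Margulis' Normal Subgroup Theorem. By Proposition~\ref{thm:surface}, $N$ sits as a normal subgroup of $G := \Mod_g^1/\Mod_g^1[p]$ via point-pushing, so the groups $Q_i$ of Theorem~\ref{thm:main} provide the natural ambient source.

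First I would unpack the construction of the $Q_i$ from the proof of Theorem~\ref{thm:main}: each $Q_i$ should arise as the kernel of a homomorphism $\phi_i\colon Q_{i-1}\to M_i$ induced by lifting mapping classes along a suitable finite characteristic cover $\widetilde\Sigma_i \to S_g^1$, taken modulo the deck group and modulo $p$-th powers of Dehn twists on the cover. Setting $N_i := N \cap Q_i$, the fact that the series starts at $Q_1 = G$ makes each $Q_i$ normal in $G$, which together with normality of $N$ in $G$ forces $N_i \trianglelefteq N_{i-1}$.

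The main step is to verify that $[N_{i-1}:N_i] = \infty$ at every stage, which I would do by analysing $\phi_i|_{N_{i-1}}$ directly. A loop $\gamma \in \pi_1(S_g)$ lifts to $\widetilde\Sigma_i$ exactly when $\gamma$ belongs to the finite-index subgroup $\pi_1(\widetilde S_g) \le \pi_1(S_g)$ corresponding to the cover; moreover, the lift of the point-push of $\gamma$ is the point-push of the lifted loop. Hence, up to a finite ambiguity from the deck group, $\phi_i|_{N_{i-1}}$ is identified with the natural map $\pi_1(\widetilde S_g)/\pi_1(\widetilde S_g)[p] \to \pi_1(\widetilde S_g^1)/\pi_1(\widetilde S_g^1)[p]$ used inside Theorem~\ref{thm:main}, and the same witnesses of infinite index should transfer. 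Granting this, Margulis' Normal Subgroup Theorem applied to the chain $\{N_i\}$ precludes commensurability with any higher-rank lattice.

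The principal obstacle is exactly this last verification: confirming that the infinite-order elements produced by the covering construction of Theorem~\ref{thm:main} can be chosen inside the point-pushing subgroup, so that $\phi_i(N_{i-1})$ really is infinite at every stage, and that the hypotheses on $g$ and $p$ propagate to each cover in the tower so the iteration can be continued. This is essentially a careful tracking through the proof of Theorem~\ref{thm:main}, ensuring that the Dehn-twist data used there descends to the $\pi_1$-level via Proposition~\ref{thm:surface}.
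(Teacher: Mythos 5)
Your proposal follows the paper's argument essentially verbatim: the paper also sets $\overline{Q}_i = Q_i \cap \bigl(\pi_1(S_g)/\pi_1(S_g)[p]\bigr)$, observes that all the witnesses $h_i$ in the proof of Theorem \ref{thm:main} were already chosen inside $\pi_1(S_g)$ (so the ``principal obstacle'' you flag is resolved for free), and deduces infinite index at each stage from the Koberda--Santharoubane theorem before invoking Margulis. The only slight inaccuracy is describing the $Q_i$ as kernels of lifted homomorphisms --- they are normal closures of the $\proj^p_g(h_i^p)$, merely contained in those kernels --- but this does not affect the argument.
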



\subsection{Virtually K\"ahler quotients} Farb \cite{Farb} has asked whether $\Mod_g$ (with $g\ge 3$) is a {\em K\"ahler group}, that is, the fundamental group of a compact K\"ahler manifold. Our next result shows that a large class of quotient groups ${\Mod_{g}}/{\Mod_{g}[p]}$ have finite-index subgroups which are K\"ahler: 

\begin{theorem}\label{thm:kahler} Suppose $p\ge 2$ and  $\gcd(p,6)=1$. For every $g\ge 0$, the group ${\Mod_{g}}/{\Mod_{g}[p]}$ is virtually K\"ahler, that is, it has a K\"ahler subgroup of finite index.
\end{theorem}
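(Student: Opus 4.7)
The plan is to realize $\Mod_g/\Mod_g[p]$ as the orbifold fundamental group of a compact K\"ahler orbifold, and then to produce a finite unramified orbifold cover that is a smooth projective variety; the fundamental group of this cover will be a finite-index K\"ahler subgroup.

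Let $\overline{\mathcal{M}}_g$ denote the Deligne--Mumford compactification of the moduli stack of smooth genus $g$ curves, and let $\mathcal{X}_p$ denote the $p$-th root stack of $\overline{\mathcal{M}}_g$ along the boundary divisor $\partial\overline{\mathcal{M}}_g$. Since $\overline{\mathcal{M}}_g$ is projective, $\mathcal{X}_p$ is a compact K\"ahler orbifold. A local analysis at a generic boundary point combined with a van Kampen argument shows that the image of the Dehn twist $T_c$ about a vanishing cycle has order exactly $p$ in $\pi_1^{\mathrm{orb}}(\mathcal{X}_p)$; since $\pi_1^{\mathrm{orb}}(\mathcal{M}_g) = \Mod_g$, one obtains the identification
\[
\pi_1^{\mathrm{orb}}(\mathcal{X}_p) \;\cong\; \Mod_g/\Mod_g[p].
\]

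The next step is to produce a finite orbifold \'etale cover of $\mathcal{X}_p$ by a smooth projective variety. I would invoke a theorem of de Jong--Pikaart (or a construction via sufficiently deep admissible level structures in the spirit of Looijenga and Boggi) to obtain a smooth projective variety $X$ together with a finite surjective orbifold \'etale morphism $X \to \overline{\mathcal{M}}_g$ whose boundary $D_X$ is a simple normal crossings divisor. Applying Kawamata's cyclic covering lemma to the pair $(X, D_X)$ then produces a finite Galois cover $\varphi\colon Y \to X$ with $Y$ smooth projective and with $\varphi$ ramified of order exactly $p$ along each component of $D_X$. By construction, the ramification orders of $Y \to X \to \overline{\mathcal{M}}_g$ along the boundary match the generic stabilizers of $\mathcal{X}_p$, so the composition lifts to an orbifold \'etale cover $Y \to \mathcal{X}_p$. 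Consequently $\pi_1(Y)$ is a finite-index subgroup of $\Mod_g/\Mod_g[p]$; since $Y$ is a smooth projective variety, $\pi_1(Y)$ is a K\"ahler group.

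The principal difficulty is the construction of the intermediate cover $X$ and the compatibility of Kawamata's trick with the existing orbifold structure of $\overline{\mathcal{M}}_g$. The hypothesis $\gcd(p,6)=1$ enters at precisely this step: the generic stabilizers at boundary points of $\overline{\mathcal{M}}_g$ involve $\mathbb{Z}/2$-factors from the involution swapping the two branches of a non-separating node, together with $\mathbb{Z}/3$ or $\mathbb{Z}/6$-factors arising from automorphisms of elliptic tails and nodal curves of small genus supported on deeper boundary strata. Requiring $p$ to be coprime to $6$ ensures that the order-$p$ root-stack construction commutes, rather than interferes, with these $2$- and $3$-torsion stabilizers, so that Kawamata's cyclic cover yields a genuine smooth Galois cover with precisely the prescribed ramification order, and the passage back to $\mathcal{X}_p$ remains \'etale.
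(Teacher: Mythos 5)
Your overall strategy---reinterpret $\Mod_g/\Mod_g[p]$ as the orbifold fundamental group of a compactified moduli problem and then uniformize by a smooth projective variety via level structures in the style of Pikaart--de Jong---is the same as the paper's, and your identification $\pi_1^{\mathrm{orb}}(\mathcal X_p)\cong \Mod_g/\Mod_g[p]$ is a reasonable repackaging of the van Kampen computation the paper performs with Fox's lemma (Lemma \ref{lem:fox}). The gap is in your middle step. There is no smooth projective variety $X$ admitting a finite \emph{orbifold \'etale} (i.e.\ unramified along the boundary) morphism $X\to\overline{\mathcal M}_g$: the stack $\overline{\mathcal M}_g$ is simply connected, because the local inertia at the generic points of the boundary divisors contains the Dehn twists about the vanishing cycles, and the normal closure of these in $\Mod_g=\pi_1^{\mathrm{orb}}(\mathcal M_g)$ is all of $\Mod_g$. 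Hence every connected finite \'etale cover of $\overline{\mathcal M}_g$ is $\overline{\mathcal M}_g$ itself, which is not a scheme. The covers constructed by Looijenga, Boggi--Pikaart and Pikaart--de Jong \cite{Loo,BP,PJ} are smooth projective precisely because they are \emph{ramified} along the divisor at infinity; that ramification is not a defect to be engineered around with Kawamata's lemma---it is the mechanism that kills $T_c^p$. If you instead allow $X\to\overline{\mathcal M}_g$ to be ramified of some order $e>1$ along the boundary and then apply Kawamata with order $p$, the composite is ramified of order $pe$, so $Y$ uniformizes the $pe$-th root stack rather than $\mathcal X_p$, and $\pi_1(Y)$ injects into the wrong quotient of $\Mod_g$.

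The repair is to delete the intermediate cover and the Kawamata step entirely: for $k\ge 4$ and $\gcd(p,6)=1$ the compactified level cover $_{\pi_g(k,p)}\overline{\mathcal M}_g^{an}$ is itself smooth and projective, and the Pikaart--de Jong monodromy computation (Theorem \ref{thm:monodromy}) says that the kernel of the local monodromy at the boundary is generated by the $T_{c_i}^p$, i.e.\ the ramification over each boundary branch is \emph{exactly} $p$. This is precisely the assertion that this cover is an orbifold \'etale cover of your $\mathcal X_p$, so it already plays the role of your $Y$. Even then one still needs the purely group-theoretic identification $\langle\langle T_c^p\rangle\rangle_{\Mod_g(k,p)}=\Mod_g(k,p)\cap\Mod_g[p]$ (Lemma \ref{lem:normal}) to conclude that the resulting fundamental group is a finite-index subgroup of $\Mod_g/\Mod_g[p]$ and not of some smaller quotient; this point is absent from your write-up. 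Finally, your explanation of the hypothesis $\gcd(p,6)=1$ is misplaced: it is a smoothness condition on $_{\pi_g(k,p)}\overline{\mathcal M}_g^{an}$ coming from automorphisms of stable curves supported on the boundary (elliptic tails and related configurations), not a compatibility condition between a root-stack construction and Kawamata's covering trick.
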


In order to prove Theorem \ref{thm:kahler}, we adapt arguments of Pikaart-Jong \cite{PJ} to exhibit a compact K\"ahler manifold whose fundamental group is a finite index subgroup of ${\Mod_{g}}/{\Mod_{g}[p]}$. 

As an immediate consequence, we obtain the following result; compare with Theorem \ref{thm:main}: 

\begin{corollary}\label{cor:rank1}
Suppose $p\ge 2$ and $\gcd(p,6)=1$. For every $g\ge 0$, the group $\Mod_g/\Mod_g[p]$  is not a lattice in $\SO(n,1)$, or more generally in a group of 
Hodge type.
\end{corollary}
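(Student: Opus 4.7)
This is a formal consequence of Theorem \ref{thm:kahler} combined with the classical obstructions to Kähler groups arising as lattices in $\SO(n,1)$, or more generally in a semisimple Lie group of Hodge type. The plan is to argue by contradiction.

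Assume that $\Mod_g/\Mod_g[p]$ is a lattice in such a Lie group $G$. By Theorem \ref{thm:kahler}, there is a finite-index subgroup $K\le \Mod_g/\Mod_g[p]$ which is the fundamental group of a compact Kähler manifold. Since a finite-index subgroup of a lattice is again a lattice in the same ambient Lie group, $K$ is itself a Kähler lattice in $G$. Thus it suffices to show that $G$ admits no Kähler lattice, or at worst that the only possible Kähler lattices cannot be isomorphic to a finite-index subgroup of $\Mod_g/\Mod_g[p]$.

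Next I would invoke the appropriate rigidity theorem. In the case $G=\SO(n,1)$ with $n\ge 3$, the harmonic-map techniques of Sampson-Siu and Carlson-Toledo show that a lattice cannot be a Kähler group; for $n=2$, lattices in $\SO(2,1)$ are virtually closed surface or free groups, a possibility excluded by the rich quotient structure of $\Mod_g/\Mod_g[p]$ (for instance, via the surjection onto $\operatorname{Sp}(2g,\Z/p\Z)$ arising from the symplectic representation, since $p$-th powers of Dehn twists act as $p$-th powers of transvections and hence trivially mod $p$; the resulting non-abelian finite quotients of every finite-index subgroup are incompatible with being virtually a surface group of any fixed genus, at least for $g\ge 2$, the small genera being treated directly). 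For general non-Hermitian semisimple Lie groups of Hodge type, the Hodge-theoretic rigidity of Carlson-Toledo and Simpson, supplemented by the Gromov-Schoen superrigidity for lattices in $\operatorname{Sp}(n,1)$ and $F_{4(-20)}$, likewise precludes the existence of any Kähler lattice.

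The main obstacle is identifying the precise version of the Carlson-Toledo/Simpson rigidity statement applicable to the class of Lie groups meant here by "of Hodge type", and checking that it uniformly rules out Kähler lattices (or reduces them to surface groups, which can then be excluded by hand as above). Once the appropriate rigidity reference is pinned down, the corollary follows with no further input beyond Theorem \ref{thm:kahler}.
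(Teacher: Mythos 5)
Your proposal is correct and follows essentially the same route as the paper, whose proof is a one-line reduction: $\Mod_g/\Mod_g[p]$ is virtually K\"ahler by Theorem \ref{thm:kahler}, a finite-index subgroup of a lattice is again a lattice, and Carlson--Toledo \cite{CT} (for $\SO(n,1)$) together with Simpson \cite{Simpson} (for groups of Hodge type) excludes K\"ahler lattices there. Your extra care about the $n=2$ case (via the surjection onto $\operatorname{Sp}(2g,\Z/p\Z)$) and the $\operatorname{Sp}(n,1)$, $F_{4(-20)}$ cases is a reasonable supplement but not a different argument.
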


Another consequence is the following:

\begin{corollary}\label{cor:homom}
Let $g\ge 0$. Given $p\ge 2$ with $\gcd(p,6)=1$, a finite index K\"ahler subgroup $J\subset \Mod_g/\Mod_g[p]$, a 
lattice $\Lambda\subset \SO(n,1)$ and any homomorphism 
\[ \Phi: \Mod_g/\Mod_g[p]\to \Lambda \subset \SO(n,1), n >2\]
then one of the following holds: 
\begin{enumerate}
\item $\Phi|_J$ factors through $\Z$; 
\item  $\Phi|_J$ factors through $\pi_1(S_h)$, for some $h\geq 2$;
\item  $\Phi|_J$ is trivial. 
\end{enumerate}
In the first two cases $\Phi|_J$ surjects onto $\Z$ and $\pi_1(S_h)$, respectively. 
\end{corollary}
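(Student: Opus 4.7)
The plan is to treat the statement as a rigidity question for representations of a Kähler group into $\SO(n,1)$, and invoke the classical factorization theorem of Sampson and Carlson--Toledo. Since $J$ is Kähler (such $J$ existing by Theorem~\ref{thm:kahler}), write $J = \pi_1(X)$ for some compact Kähler manifold $X$. Setting $\rho := \Phi|_J : \pi_1(X) \to \Lambda \subset \SO(n,1)$, I would split the analysis according to the dynamical type of $\rho(J)$ acting on $\mathbb{H}^n \cup \partial\mathbb{H}^n$.

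If the action is \emph{non-elementary} (that is, fixes no point of $\overline{\mathbb{H}^n}$), then I would invoke the Carlson--Toledo factorization theorem, which rests on Sampson's Bochner identity for harmonic maps into $\mathbb{H}^n_{\mathbb{R}}$ with $n \ge 3$: any non-elementary homomorphism from $\pi_1(X)$ to $\mathrm{Isom}(\mathbb{H}^n_{\mathbb{R}})$ factors through the orbifold fundamental group of a hyperbolic Riemann surface. Passing to a further finite index subgroup of $J$ to absorb the orbifold torsion produces a factorization of $\Phi|_J$ through some $\pi_1(S_h)$ with $h \ge 2$ and with the first map surjective; this is case~(2).

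If the action is \emph{elementary}, then $\rho(J)$ lies in the stabilizer of a point of $\overline{\mathbb{H}^n}$ and is virtually solvable; after passing to a further finite index subgroup of $J$, $\rho$ factors through a finitely generated abelian quotient. To reduce this abelian image to being virtually cyclic, and thereby conclude cases~(1) or~(3), I would combine (a) the Hodge decomposition on $H^1(X;\mathbb{R})$ provided by the Kähler hypothesis, which constrains the real $1$-cocycles arising as Busemann-type functions of harmonic $\rho$-equivariant maps to $\mathbb{H}^n$, with (b) the fact that $\Mod_g/\Mod_g[p]$ is generated by elements of finite order (the images of Dehn twists have order dividing $p$), which severely restricts the torsion-free abelian quotients of any finite index subgroup. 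According as the resulting cyclic image is infinite or trivial, we land in case~(1) or case~(3).

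The main obstacle is this last reduction, specifically ruling out higher-rank parabolic images: $\rho(J)$ could a priori embed as a free abelian subgroup of rank up to $n-1$ inside the unipotent radical of a parabolic subgroup of $\SO(n,1)$. I would address this by analyzing the composition of $\rho$ with the Busemann cocycle at the parabolic fixed point, regarded as a class in $H^1(X;\mathbb{R})$, and using Kähler Hodge theory together with~(b) to force the rank of the free abelian image down to at most one, thereby completing the trichotomy.
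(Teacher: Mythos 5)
Your non-elementary branch is the paper's proof: realize $J=\pi_1(\mathcal X)$ for a compact K\"ahler $\mathcal X$, homotope the induced map $\mathcal X\to\mathbb H^{n+1}_{\R}/\Lambda$ to a harmonic map (Eells--Sampson), and apply the Carlson--Toledo factorization. But the paper applies Carlson--Toledo (Theorem \ref{thm:Carlson-Toledo}) in one stroke to \emph{every} such homomorphism: the dichotomy ``$F$ factors through a circle'' versus ``$F$ factors through a compact Riemann surface'' already contains your elementary case, since the circle branch forces $\Phi|_J$ to factor through $\Z$. By splitting off the elementary case and discarding the harmonic map there, you throw away exactly the tool (the rank $\le 1$ part of the Carlson--Toledo rank estimate) that rules out higher-rank parabolic images, and the replacement argument you sketch has a genuine gap.

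Concretely: your ingredient (b) --- that $\Mod_g/\Mod_g[p]$ is generated by torsion and that this ``severely restricts the torsion-free abelian quotients of any finite index subgroup'' --- is false as a general principle (the infinite dihedral group is generated by two involutions and contains $\Z$ with index $2$), and in this specific setting it would settle Ivanov's question: whether finite-index subgroups of $\Mod_g$ have finite abelianization is open, and Theorem \ref{prop:abelianization} of this very paper shows the question is equivalent for $\Mod_g/\Mod_g[p]$. Your ingredient (a) is also insufficient on its own: the first Betti number of a compact K\"ahler manifold is merely even, so Hodge theory does not prevent $J$ from surjecting onto $\Z^2$ inside the unipotent radical of a parabolic; you would need the pluriharmonicity/rank argument for the equivariant harmonic map, i.e.\ you would be re-proving Carlson--Toledo. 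Two smaller points: in the non-elementary case you pass to a further finite-index subgroup of $J$ to kill orbifold torsion, which proves a weaker statement than the one asserted about $\Phi|_J$ itself --- the paper instead observes that $\Phi(J)$ is a finitely generated subgroup of $\pi_1(S_h)$ and hence itself a surface group by Scott's theorem; and the existence of the equivariant harmonic map is cleanest when one works with the map to the space form $\mathbb H^{n+1}_{\R}/\Lambda$ as the paper does, rather than with the abstract representation.
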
 

\begin{remark}
The proof of Theorem \ref{thm:kahler} will provide explicit  finite index K\"ahler subgroups $J$. Thus, given any finite index subgroup 
$G\subset  \Mod_g/\Mod_g[p]$, Corollary \ref{cor:homom} applies to the subgroup $J\cap G$. 
As will become evident from our arguments, the groups $J$  fall frequently in  case (3) above. 
\end{remark}

One provides linear representations of the fundamental group of a K\"ahler manifold, like those obtained in 
Theorem \ref{thm:kahler}, by considering variations of Hodge structures, in particular those obtained 
from families of complex algebraic varieties. We might wonder whether the 
restrictions of the quantum representations, known to factorise through $\Mod_g/\Mod_g[p]$,
arise geometrically as above.

\begin{conjecture}
Complexified quantum representations of $\Mod_g$  are locally rigid. 
\end{conjecture}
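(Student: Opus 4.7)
The plan is to combine Theorem \ref{thm:kahler} with non-abelian Hodge theory in order to recast the conjecture as a rigidity statement for polystable holomorphic bundles on a compact K\"ahler manifold, and then to exploit the $\mathrm{Aut}(\mathbb{C})$-Galois action on the finite set of quantum representations.

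Concretely, I would first apply Theorem \ref{thm:kahler} to pass to a finite-index K\"ahler subgroup $J=\pi_1(X)\subset \Mod_g/\Mod_g[p]$, and restrict a complexified quantum representation $\rho$ to $J$. Since Reshetikhin--Turaev representations are unitary with respect to the natural Hermitian pairing on the TQFT state space, $\rho|_J$ has bounded image; by the theorems of Corlette and Simpson, the associated flat bundle admits a harmonic metric whose Higgs field is forced to vanish, and the non-abelian Hodge correspondence identifies $\rho|_J$ with a polystable holomorphic bundle $E_\rho$ on $X$ of vanishing Chern classes. Local rigidity of $\rho|_J$ is then equivalent to $H^1(X,\mathrm{End}(E_\rho))=0$. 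To establish this, I would argue by Galois conjugation: the quantum representations at the various primitive $p$-th roots of unity form a finite orbit under $\mathrm{Aut}(\mathbb{C})$ in the character variety of $J$, and by Simpson's theorem on Galois conjugates of complex variations of Hodge structure, a positive-dimensional deformation through $\rho$ would propagate to positive-dimensional deformations through each Galois conjugate, contradicting the discreteness of the orbit.

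The main obstacle is twofold. First, Theorem \ref{thm:kahler} only produces a compact K\"ahler $X$, while Simpson's Galois-conjugation theorem is formulated for smooth complex projective varieties; sharpening the Pikaart--de Jong covering construction so as to yield a projective model is a necessary preliminary step, and it is not clear that the quotient by $\Mod_g[p]$ admits such a refinement for every admissible $p$. Second, even granted projectivity, the Galois argument only rules out deformations that remain within the quantum character locus, so one would still need further input — for instance a geometric analysis of the Hitchin connection on the Verlinde bundles, or a uniform arithmeticity statement across genera and levels — in order to rule out non-algebraic deformations of $E_\rho$. Verifying semisimplicity of $\rho|_J$ is a minor but genuine preliminary needed for the non-abelian Hodge correspondence to apply cleanly.
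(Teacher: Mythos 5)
The statement you are attempting to prove is labelled a \emph{Conjecture} in the paper, and the authors give no proof of it: the only accompanying text is the remark that, \emph{if} the conjecture holds, then by Simpson's theorem the quantum representation at a prime level would be a complex direct factor of a $\Q$-variation of Hodge structures. There is therefore no argument in the paper to compare yours against, and your proposal must be judged on its own. As written it does not close the conjecture, and beyond the two obstacles you candidly list there are at least three further problems.

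First, local rigidity of $\rho$ as a representation of $\Mod_g$ is not implied by local rigidity of the induced representation of $\Mod_g/\Mod_g[p]$, nor of a finite-index subgroup $J$ thereof: for a quotient $Q=G/N$ the inflation map $H^1(Q,\mathrm{Ad}\,\rho)\to H^1(G,\mathrm{Ad}\,\rho)$ is injective but in general far from surjective, the cokernel being controlled by $H^1(N,\mathrm{Ad}\,\rho)^{Q}$; a small deformation of $\rho$ as a representation of $\Mod_g$ need not continue to kill the $p$-th powers of Dehn twists, so rigidity downstairs says nothing about rigidity upstairs. Second, your appeal to boundedness of the image is exactly what fails for \emph{complexified} quantum representations: the Galois conjugates of the Reshetikhin--Turaev representations preserve in general only an indefinite Hermitian form, so their images land in pseudo-unitary groups and are typically unbounded (this unboundedness is the engine behind the results of \cite{FP} cited in the paper). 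Hence the Corlette--Simpson step producing a harmonic metric with vanishing Higgs field does not apply, and the identification of $\rho|_J$ with a polystable bundle with zero Higgs field is unavailable. Third, the Galois-orbit argument runs Simpson's theorem backwards: finiteness of the $\mathrm{Aut}(\C)$-orbit of a point of the character variety does not prevent that point from lying on a positive-dimensional component (each conjugate could simply sit on its own positive-dimensional component), whereas Simpson's result takes rigidity as the \emph{hypothesis} and deduces that the representation underlies a variation of Hodge structure --- which is precisely the implication the authors record as a consequence of the conjecture, not a route to proving it.
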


If true, this conjecture would imply that the quantum representation at a prime level  
is a complex direct factor of a  $\Q$-variation of Hodge structures, according to (\cite{Simpson}, Thm.5, p.56).

\subsection{Abelianization of finite-index subgroups}
A celebrated result of Powell \cite{Powell} asserts that $\Mod_g$ has trivial abelianization whenever $g\ge 3$. In stark contrast, the situation for finite index subgroups remains mysterious: a well-known question of Ivanov \cite{Ivanov-15} asks whether every finite-index subgroup of $\Mod_g$ has finite abelianization. 
Our final result states that this question is equivalent to the analogous problem for the quotient subgroups $\Mod_g/\Mod_g[p]$. More concretely, we have:  

\begin{theorem}\label{prop:abelianization}
Let $K\subset \Mod_g$  be a normal subgroup of finite index $n$.
Then $\dim H_1(K;\Q) >0$ if and only if the image $K(n)$ of $K$ into $\Mod_g/\Mod_g[n]$ is of 
finite index $n$  and $\dim H_1(K(n);\Q) >0$. 
\end{theorem}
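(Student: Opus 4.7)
The plan is to separate the statement into three parts: the ``index $n$'' claim (which turns out to be automatic), the easy direction, and the subtle converse, the last of which is handled by a five-term exact sequence argument combined with Powell's vanishing theorem.

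Since $K$ is normal of index $n$ in $\Mod_g$, the quotient $\Mod_g/K$ has order $n$, so $T_a^n\in K$ for every simple closed curve $a$; consequently $\Mod_g[n]\subseteq K$, and $K(n)=K/\Mod_g[n]$ sits inside $\Gamma:=\Mod_g/\Mod_g[n]$ as a subgroup of index $[\Mod_g:K]=n$. This establishes the ``index $n$'' claim. The quotient map $K\twoheadrightarrow K(n)$ induces a surjection $H_1(K;\Q)\twoheadrightarrow H_1(K(n);\Q)$, which already gives the implication $H_1(K(n);\Q)>0\Rightarrow H_1(K;\Q)>0$.

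For the converse I would apply the five-term exact sequence in rational homology for $1\to\Mod_g[n]\to K\to K(n)\to 1$:
\[ H_2(K(n);\Q)\to H_1(\Mod_g[n];\Q)_{K(n)}\to H_1(K;\Q)\twoheadrightarrow H_1(K(n);\Q)\to 0. \]
The kernel $W$ of the right-hand surjection equals the image of $H_1(\Mod_g[n];\Q)$ in $H_1(K;\Q)$, which is the $\Q$-span of the classes $[T_a^n]$ for $a$ a simple closed curve. The converse then reduces to establishing that $W\subsetneq H_1(K;\Q)$ whenever $H_1(K;\Q)\neq 0$. Powell's theorem $H_1(\Mod_g;\Q)=0$, applied to the analogous five-term sequence for $1\to K\to\Mod_g\to Q\to 1$ (with $Q=\Mod_g/K$), forces $H_1(K;\Q)$ to be a $\Q[Q]$-module with vanishing $Q$-invariants and coinvariants; combining this with a transfer computation $\tau:\Mod_g\to K^{\mathrm{ab}}$ shows that, for each simple closed curve $a$, the $Q$-orbit sum $\sum_{q} q\cdot[T_a^n]$ (over a set of $\langle T_a\rangle$-coset representatives in $Q$) vanishes in $H_1(K;\Q)$.

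The main obstacle, I expect, is deducing $W\subsetneq H_1(K;\Q)$ from these orbit-sum relations: individual classes $[T_a^n]$ need not vanish in $H_1(K;\Q)$, so the proper inclusion does not come from any direct vanishing. The resolution should come from a careful $\Q[Q]$-module decomposition of $H_1(K;\Q)$ combined with an inflation-restriction analysis for $\Mod_g[n]\triangleleft K$ showing that Dehn twist powers only generate a specific subrepresentation, which is proper whenever $H_1(K;\Q)$ is nonzero. If this final step succeeds, then starting from any nontrivial homomorphism $\phi : K \to \Q$, a nontrivial modification vanishing on $\Mod_g[n]$ produces the required nontrivial element of $H^1(K(n);\Q)$.
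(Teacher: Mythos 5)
Your setup is sound: the index-$n$ claim, the surjection $H_1(K;\Q)\twoheadrightarrow H_1(K(n);\Q)$ giving the easy direction, and the reduction of the converse to showing that the subspace $W\subset H_1(K;\Q)$ spanned by the classes $[T_a^n]$ is proper. But the proof stops exactly where the real content lies, and you flag this yourself. Moreover, your guiding expectation --- that ``individual classes $[T_a^n]$ need not vanish in $H_1(K;\Q)$'' --- is the opposite of what is true and of what the paper uses. The theorems of Bridson (via semisimple actions on ${\rm CAT}(0)$ spaces) and Putman (via the lantern relation) assert precisely that for a finite-index subgroup $K\subset\Mod_g$ with $T_c^n\in K$, the class of $T_c^n$ is torsion in $H_1(K;\Z)$, hence zero in $H_1(K;\Q)$, for \emph{each} simple closed curve $c$ individually. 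This gives $W=0$ at once, so the rational abelianization of $K$ factors through $K(n)=K/\langle\langle T_c^n\rangle\rangle_K$ and $H_1(K;\Q)\cong H_1(K(n);\Q)$; that is the entire proof in the paper.

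The gap is genuine because the machinery you propose cannot substitute for this input. Powell's theorem together with the five-term sequence for $1\to K\to\Mod_g\to Q\to 1$ only tells you that the $Q$-coinvariants of $H_1(K;\Q)$ vanish, and the transfer only kills $Q$-orbit sums; neither controls a single class $[T_a^n]$, and no amount of $\Q[Q]$-module bookkeeping will show that the twist classes span a proper subrepresentation without an additional geometric or relation-theoretic ingredient. The fix is simply to cite the Bridson--Putman vanishing (as the paper does) in place of your final paragraph; with that, $W=0$ and the inflation map $H_1(K(n);\Q)\to H_1(K;\Q)$... rather, the factorization of the abelianization through $K(n)$, finishes the argument immediately.
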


\medskip

\noindent{\bf Acknowledgements.} The authors are grateful to M. Boggi, R. Coulon, F. Dahmani, 
P. Eyssidieux, P. Ha\"issinsky, C. Leininger, 
M. Mj, W. Pitsch, and J. Souto for conversations and to H. Wilton  for pointing out the need to include the group $A$ in the statement of Proposition \ref{thm:surface}. 

\section{Preliminaries} 
In this section we introduce the basic definitions and notation needed for the rest of this note. We refer the reader to the standard text \cite{FM} for a comprehensive introduction to these topics. 

As mentioned in the introduction, we denote by $S_{g}^n$ a connected orientable surface of genus $g\ge 0$ with empty boundary and with $n\ge 0$ marked points. Let $\Mod_g^n$ be the mapping class group of $S_g^n$, namely the group of self-homeomorphisms of $S_g^n$ up to homotopy. From now on, if $n=0$ we will drop it from the notation and  write $S_g$ and $\Mod_g$. 

\subsection{Power subgroups} We write $T_c$ for the right Dehn twist about the (isotopy class of a) simple closed curve $c \subset S_g^n$. Given a number $p\in \N$, consider the subgroup $\Mod_g^n[p]$ generated by all $p$-powers of Dehn twists. Observe that $\Mod_g^n[p]$ is a normal subgroup, as \[fT_c f^{-1} = T_{f(c)}\] for every simple closed curve $c\subset S_g^n$ and every $f\in \Mod_g^n$. 

The following result is due to Humphries \cite{Humphries} in the case $g=2$, and to the second author \cite{F} in the case $g\ge 3$: 

\begin{theorem}
Let $g\ge 2$ and $p\ge 0$. If $g= 2$, assume $p\ge 4$; if $g\ge 3$, assume that  $p\notin \{2,3,4,6,8,12\}$. Then $\Mod_g^n[p]$ has infinite index in $\Mod_g^n$. 
\label{thm:infindex}
\end{theorem}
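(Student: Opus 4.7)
The overall strategy is to construct, for each admissible $(g,p)$, a homomorphism $\rho: \Mod_g^n \to H$ with $H$ an infinite group, such that $\rho(T_c^p)=1$ for every simple closed curve $c \subset S_g^n$. Since $\Mod_g^n[p]$ is generated by such $p$-powers and is normal, this forces $\Mod_g^n[p] \subseteq \ker \rho$, whence $[\Mod_g^n : \Mod_g^n[p]] \ge |\mathrm{Im}(\rho)| = \infty$. I would begin by reducing to the closed case $n=0$: the capping-off surjection $\Mod_g^n \twoheadrightarrow \Mod_g$ sends each Dehn twist either to a Dehn twist in $\Mod_g$ (if the curve is essential in $S_g$) or to the identity (if the curve is peripheral), so it carries $\Mod_g^n[p]$ into $\Mod_g[p]$. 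Hence if $\Mod_g[p]$ has infinite index in $\Mod_g$, its preimage in $\Mod_g^n$ also has infinite index and contains $\Mod_g^n[p]$.

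For $g \ge 3$ with $p \notin \{1,2,3,4,6,8,12\}$, the plan is to employ the quantum (SO(3) or SU(2)) Witten--Reshetikhin--Turaev projective representations $\rho_r : \Mod_g \to \mathrm{PGL}(V_r(S_g))$ at a level $r$ chosen as a function of $p$. These satisfy the crucial property that the image of each Dehn twist $T_c$ is diagonalizable with eigenvalues that are $p$-th (up to a global scalar, $2p$-th) roots of unity; after composing with the projection to $\mathrm{PGL}(V_r)/Z$ by a further finite central quotient if needed, $T_c^p$ maps to the identity. The remaining content is that $\mathrm{Im}(\rho_r)$ is infinite for the claimed range of $p$; this is established via an arithmetic analysis of the trace spectrum of specific elements (such as products of twists along filling multicurves), identifying a Galois conjugate with an eigenvalue of infinite multiplicative order.

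For $g=2$ and $p \ge 4$, I would follow Humphries \cite{Humphries}: use the hyperelliptic involution to realize $\Mod_2$ as a central extension of the mapping class group of a sphere with six marked points, and then construct an infinite quotient of the latter in which the images of the standard Dehn twist generators have order dividing $p$. An explicit candidate comes from the fact that this punctured-sphere mapping class group maps onto a quotient of the spherical braid group $B_6/Z(B_6)$, where suitable infinite quotients killing $p$-th powers of standard generators are available for $p\ge 4$.

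The main obstacle in both regimes is the \emph{infinitude} of the image: exhibiting a homomorphism that kills $T_c^p$ is easy via finite quotients like $\mathrm{Sp}(2g,\Z/p)$, but any such finite target only yields finite index. The essential input, which I would import directly from \cite{Humphries,F} rather than re-derive, is the non-triviality and infiniteness of the relevant quantum/braid-type image; the excluded values $\{2,3,4,6,8,12\}$ (and $p=2,3$ for $g=2$) are precisely those where the known methods fail because the associated roots of unity force the representation to land in a finite group.
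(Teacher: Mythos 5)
The paper does not actually prove this statement: it is imported verbatim from Humphries \cite{Humphries} (for $g=2$) and Funar \cite{F} (for $g\ge 3$), so there is no in-paper argument to compare against. Your outline correctly reproduces the strategy of those cited proofs --- quantum representations in which Dehn twists have finite projective order but the image is infinite for $g\ge 3$, and the hyperelliptic/braid-group reduction for $g=2$ --- and your reduction from the punctured to the closed case via the forgetful homomorphism is a correct (and necessary) addition; since the crucial infiniteness of the image is explicitly imported from the same references the paper itself relies on, your proposal stands on the same footing as the paper's treatment.
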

Abusing notation, we will write \[\proj^p_g: \Mod_g^n \to \frac{\Mod_g^n}{\Mod_g^n[p]}\] for the natural projection. By  Theorem \ref{thm:infindex}, the image of $\proj^p_g$ is infinite whenever $g=2$ and $p\ge 4$, or if $g\ge 3$ and $p\notin \{2,3,4,6,8,12\}$.

\subsection{The Birman short exact sequence}
\label{sec:ses} There is an obvious surjective homomorphism \[\Mod_g^1 \to \Mod_g\] given by {\em forgetting} the marked point of $S_g^1$, call it $x_0$. This homomorphism is well-known to fit in the so-called {\em Birman short exact sequence}: 

\begin{equation}
1 \to \pi_1(S_g,x_0) \to \Mod_g^1 \to \Mod_g \to 1
\label{eq:birman}
\end{equation}
The injective homomorphism $p: \pi_1(S_g,x_0) \to\Mod_g^1$ of \eqref{eq:birman} is called the {\em point-pushing homomorphism}. Abusing notation, we will simply write $\pi_1(S_g)$ for $\pi_1(S_g,x_0)$; in addition, we will often not distinguish between $\pi_1(S_g)$ and its image in $\Mod_g^1$ under the point-pushing homomorphism. 

We will also need a description on the image of simple loops under the point-pushing homomorphism; we refer the reader to \cite[Section 4.2]{FM} for a proof. Before stating it, we need some definitions. First, we recall that a {\em multitwist} in $\Mod_g^n$ is an element that may be written as a product of powers of Dehn twists along a set of pairwise-disjoint simple closed curves on $S_g^n$. Next, an element of $\pi_1(S_g)$ is {\em simple} if it can be realized on $S_g$ as a loop without {\em transverse} self-intersections; in particular, note that any power of a simple loop is simple, according to the definition. Armed with these definitions, we have: 

\begin{proposition}
Let $c \in \pi_1(S_g)$ be a simple loop. Then $p(c)$ is a multitwist in $\Mod_g^1$. 
\label{thm:kra}
\end{proposition}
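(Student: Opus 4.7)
The plan is to reduce the general simple loop to an embedded one and then invoke the explicit description of the point-pushing map on such a loop.

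First, I would reduce to the primitive case. Since the paper's definition allows powers of simple loops to still be called simple, I would write the given simple loop as $c = c_0^k$ for some primitive simple element $c_0 \in \pi_1(S_g)$ and $k \in \Z$. Because $p : \pi_1(S_g) \to \Mod_g^1$ is a homomorphism (it is the kernel inclusion in the Birman sequence \eqref{eq:birman}), we have $p(c) = p(c_0)^k$, so it suffices to show that $p(c_0)$ itself is a multitwist along a disjoint curve system; raising to the $k$-th power then preserves this property, since powers of disjoint Dehn twists commute.

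Second, I would represent $c_0$ concretely. If $c_0$ is null-homotopic, then $p(c_0)$ is trivial, which is a (vacuous) multitwist, and we are done. Otherwise, since $c_0$ is primitive and simple, it can be homotoped rel $x_0$ to an embedded loop $\gamma \subset S_g$ meeting itself only at $x_0$, with distinct tangent directions there; smoothing at $x_0$ gives an embedded simple closed curve in a neighborhood of $\gamma$. A regular neighborhood $N(\gamma) \subset S_g$ is then an embedded annulus (using the orientability of $S_g$), with two boundary components that I will call $d_L$ and $d_R$, both disjoint simple closed curves on $S_g$.

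Third, I would derive the explicit formula $p(c_0) = T_{d_L}\, T_{d_R}^{-1}$. The point-pushing map takes $c_0$ to the mapping class of a homeomorphism $\phi$ of $S_g^1$ obtained by a compactly supported ambient isotopy of $S_g$ that drags $x_0$ once around $\gamma$ and returns it to $x_0$. Such an isotopy can be chosen to be supported inside the annular neighborhood $N(\gamma)$: outside $N(\gamma)$, the isotopy is stationary, while inside $N(\gamma)$ it consists of a full rotation of one boundary circle relative to the other. Reading off the monodromy of this isotopy on the two sides of $\gamma$, one sees that $\phi$ restricted to $N(\gamma)$ is precisely a Dehn twist about $d_L$ composed with an inverse Dehn twist about $d_R$ (with the signs determined by the fixed orientation of $S_g$). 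Since $\phi$ is the identity off $N(\gamma)$ and $d_L, d_R$ are disjoint, this gives the desired multitwist expression for $p(c_0)$.

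The main obstacle will be carrying out Step three cleanly: one must choose conventions for orientation, for "left/right" boundary of the annulus, and for the sign of Dehn twists, then verify that the isotopy exhibiting $p(c_0)$ really restricts to $T_{d_L} T_{d_R}^{-1}$ in the annulus. This is essentially the content of Kra's formula, and I would present it by picturing the annulus $N(\gamma)$ as $S^1 \times [-1,1]$ with $\gamma = S^1 \times \{0\}$ and tracking the pushing isotopy as the standard shear that equals the identity on the outer boundary circles' complement — this local computation is the only nontrivial verification, and everything else is bookkeeping.
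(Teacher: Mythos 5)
Your proposal is correct and follows exactly the route the paper takes: the paper defers the proof to \cite[Section 4.2]{FM} and then records precisely your conclusion, namely that for $c=a^n$ with $a$ primitive one has $p(a)=T_{a^+}T_{a^-}^{-1}$ and hence $p(c)=T_{a^+}^{n}T_{a^-}^{-n}$, a multitwist along the two disjoint boundary curves of a regular annular neighbourhood. Your reduction to the primitive case and the local computation of the pushing isotopy inside the annulus are exactly the standard (Kra/Farb--Margalit) argument being cited.
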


In fact, it is possible to give a more concrete description of $p(c)$ in the theorem above. Indeed, suppose that $c=a^n$, for some $a\in \pi_1(S_g)$ {\em primitive}. Let $a^{\pm}$ be the boundary components of a regular neighbourhood of (a representative of) $a$ in $S_g^1$. Then $p(a)= T_{a^+}^{} T_{a^-}^{-1}$, and therefore $p(c)= T_{a^+}^n T_{a^-}^{-n}$. 

\subsubsection{Algebraic version of the Birman exact sequence} Given a group $G$, denote by $\Aut(G)$ its automorphism group, and by $\Out(G)$ its {\em outer} automorphism group, namely the group of conjugacy classes of automorphisms of $G$. The following is the celebrated {\em Dehn-Nielsen-Baer Theorem} (see, for instance, \cite{FM}):

\begin{theorem}[Dehn-Nielsen-Baer]
For every $g\ge 1$, \[\Mod_g \cong \Out(\pi_1(S_g)).\]
\end{theorem}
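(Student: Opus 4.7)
\emph{Setup and easy direction.} The first task is to verify the index claim. Since $K\trianglelefteq\Mod_g$ has index $n$, the finite quotient $\Mod_g/K$ has exponent dividing $n$, so $T_c^n\in K$ for every simple closed curve $c$, and since $\Mod_g[n]$ is generated by such $n$-th powers, $\Mod_g[n]\subseteq K$. Hence the image of $K$ in $\Mod_g/\Mod_g[n]$ equals $K(n):=K/\Mod_g[n]$ and has index $[\Mod_g:K]=n$, as asserted. The quotient homomorphism $K\twoheadrightarrow K(n)$ then induces a surjection $H_1(K;\Q)\twoheadrightarrow H_1(K(n);\Q)$, from which the implication $\dim H_1(K(n);\Q)>0\Rightarrow\dim H_1(K;\Q)>0$ is immediate.

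\emph{Strategy for the converse.} For the remaining direction I would prove the stronger statement that the surjection $H_1(K;\Q)\twoheadrightarrow H_1(K(n);\Q)$ is in fact an isomorphism. Its kernel is the image of $H_1(\Mod_g[n];\Q)\to H_1(K;\Q)$, i.e.\ the $\Q$-span of the classes $\{[T_c^n]:c\text{ simple}\}\subset K^{ab}\otimes\Q$, so it suffices to show that every such class vanishes. Viewing $K^{ab}\otimes\Q$ as a $\Q[\Mod_g/K]$-module via conjugation ($\overline{g}\cdot[T_c^n]=[T_{g(c)}^n]$), the classes $[T_c^n]$ corresponding to curves in a fixed $\Mod_g$-orbit all sit in a single cyclic submodule, so the task reduces to showing that the annihilator ideal of $[T_c^n]$ in $\Q[\Mod_g/K]$ is the whole ring. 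Two families of annihilators are available. Because $H_1(\Mod_g;\Q)=0$ (the abelianization $\Mod_g^{ab}$ is torsion for every $g\geq 2$: zero for $g\geq 3$ by Powell's theorem, and finite for $g=2$), the transfer map $\operatorname{tr}\colon H_1(\Mod_g;\Q)\to H_1(K;\Q)$ is identically zero, and the explicit transfer formula for the normal inclusion $K\subset\Mod_g$ applied to $[T_c]$ produces the identity $\sum_jg_j^{-1}\cdot[T_c^{m_c}]=0$ in $K^{ab}\otimes\Q$, where $m_c$ is the order of $T_c$ modulo $K$ and the $g_j$ represent cosets of $\langle\overline{T_c}\rangle$ in $\Mod_g/K$; since $[T_c^n]=(n/m_c)[T_c^{m_c}]$ rationally, this gives a nonzero annihilator. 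Separately, the lantern relation $T_{a_1}T_{a_2}T_{a_3}=T_{d_1}T_{d_2}T_{d_3}T_{d_4}$ on an embedded four-holed sphere can be raised to its $n$-th power termwise (because the $a_i$'s and respectively the $d_i$'s are disjoint, hence their twists commute), yielding $T_{a_1}^nT_{a_2}^nT_{a_3}^n=T_{d_1}^nT_{d_2}^nT_{d_3}^nT_{d_4}^n$ inside $\Mod_g[n]\subseteq K$, hence the linear relation $\sum_j[T_{a_j}^n]=\sum_i[T_{d_i}^n]$ in $K^{ab}\otimes\Q$; $\Mod_g$-translating the configuration yields a $\Mod_g/K$-equivariant family of such relations.

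\emph{Expected main obstacle.} The decisive step is to combine the transfer and lantern annihilators into a proof that the full annihilator ideal of $[T_c^n]$ in $\Q[\Mod_g/K]$ is the whole ring, and not merely a proper ideal. This is where the main difficulty lies: one exploits $\Mod_g$-transitivity on curves of each topological type so that $\Mod_g$-translates of a single lantern configuration cover every $\Mod_g/K$-coset, and one then checks that the joint span of the resulting annihilators is all of $\Q[\Mod_g/K]$. It is plausible that the paper shortcuts this via the five-term exact sequence $H_2(\Mod_g;\Q)\to H_2(\Mod_g/\Mod_g[n];\Q)\to H_1(\Mod_g[n];\Q)_{\Mod_g/\Mod_g[n]}\to 0$ together with a direct analysis of the resulting coinvariant module, but the combinatorial mechanism above is the most natural route given the tools used elsewhere in the paper.
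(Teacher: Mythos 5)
Your proposal does not address the statement in question. The statement is the Dehn--Nielsen--Baer theorem, which asserts that for every $g\ge 1$ the natural map $\Mod_g\to\Out(\pi_1(S_g))$ (sending a mapping class to the outer automorphism it induces on the fundamental group) is an isomorphism. Nothing in your argument touches this: you never mention automorphisms of $\pi_1(S_g)$, the injectivity of the natural map (which rests on the fact that a homeomorphism of $S_g$ acting trivially on $\pi_1$ up to conjugacy is isotopic to the identity), or the surjectivity (the realization of every outer automorphism by a homeomorphism, classically proved via the quasi-isometric rigidity of $\pi_1(S_g)$ acting on $\mathbb{H}^2$ and the extension of quasi-isometries to the boundary circle, or via harmonic/hyperbolic geometry arguments). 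The paper itself does not reprove this classical result; it simply cites the standard reference of Farb--Margalit.

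What you have written is instead an attempt at the paper's Theorem on abelianizations of finite-index subgroups (the statement that for a normal finite-index subgroup $K\subset\Mod_g$, one has $\dim H_1(K;\Q)>0$ if and only if the image $K(n)$ in $\Mod_g/\Mod_g[n]$ has positive first Betti number). For what it is worth, your ``easy direction'' and the observation that $\Mod_g[n]\subseteq K$ match the paper's actual argument for that theorem, and the paper's proof of the converse is much shorter than your proposed transfer-plus-lantern computation: it simply invokes the results of Bridson and Putman that $T_c^n$ dies in $H_1(K;\Q)$, so the abelianization map $K\to H_1(K;\Q)$ factors through $K(n)$, giving the reverse surjection directly. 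But none of this is relevant to the Dehn--Nielsen--Baer theorem, which is the statement you were asked to prove; as it stands the proposal is a proof of the wrong result.
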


In light of this theorem, the exact sequence \eqref{eq:birman} takes the following form: 

\begin{equation}
1\to \pi_1(S_g) \to \Aut(\pi_1(S_g)) \to \Out(\pi_1(S_g)) \to 1 
\label{eq:birmanalg}
\end{equation}

\section{Normal subgroups via covers}

In this section we will prove Theorem \ref{thm:main}. 
For an element $h$ of a group $G$ denote by $\langle \langle h\rangle\rangle_G$ the smallest normal subgroup of $G$  
containing $h$; we will drop $G$ from the notation when it is clear from the context.  
The main ingredient will be the following lemma:

\begin{lemma}  
Suppose there is an injective homomorphism \[\phi: \Mod_g^1 \to \Mod_{g'}^1\] and an element $h \in \Mod_g^1$ such that: 
\begin{enumerate}
\item $\proj^p_g(h) \in \Mod_g^1/\Mod_g^1[p]$ has infinite order.
\item $\phi(h)$ is a multitwist in $\Mod_{g'}^1$. In particular, $\phi(h^p) \in \ker(\proj^p_{g'})$. 
\item The  map \[\proj^p_{g'} \circ \hspace{.1cm} \phi: \Mod_g^1 \to \Mod_{g'}^1/\Mod_{g'}^1[p]\] has infinite image. 
\end{enumerate}
Then $\langle \langle \proj^p_g(h^p) \rangle \rangle$ is an infinite normal subgroup of $\Mod_g^1/\Mod_g^1[p]$ of infinite index. 
\label{lem:mainlemma}
\end{lemma}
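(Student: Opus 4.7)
The plan is to establish the two conclusions---that $\langle\langle \proj^p_g(h^p)\rangle\rangle$ is infinite, and that it has infinite index in $\Mod_g^1/\Mod_g^1[p]$---by separate arguments. Infiniteness is immediate from (1): since $\proj^p_g(h)$ has infinite order, so does its $p$-th power $\proj^p_g(h^p) = \proj^p_g(h)^p$, and therefore the cyclic subgroup $\langle\proj^p_g(h^p)\rangle$ is infinite, and \emph{a fortiori} so is its normal closure.

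For infinite index, I would exhibit an infinite quotient of $\Mod_g^1/\Mod_g^1[p]$ by $\langle\langle \proj^p_g(h^p)\rangle\rangle$. Consider the composition
\[
\Psi \;=\; \proj^p_{g'}\circ \phi \,:\, \Mod_g^1 \longrightarrow \Mod_{g'}^1/\Mod_{g'}^1[p],
\]
which has infinite image by (3). The goal is to show that $\Psi$ descends to a homomorphism
\[
\bar\Psi \,:\, \frac{\Mod_g^1/\Mod_g^1[p]}{\langle\langle \proj^p_g(h^p)\rangle\rangle} \longrightarrow \Mod_{g'}^1/\Mod_{g'}^1[p];
\]
since the image of $\bar\Psi$ coincides with that of $\Psi$ and is therefore infinite, the source must be infinite as well, which is exactly the statement that $\langle\langle \proj^p_g(h^p)\rangle\rangle$ has infinite index in $\Mod_g^1/\Mod_g^1[p]$. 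For $\bar\Psi$ to be well defined, we need $\Psi$ to kill both $h^p$ (immediate from (2)) and every generator of $\Mod_g^1[p]$.

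The main obstacle is this last point: it demands that $\phi(T_c^p) = \phi(T_c)^p \in \Mod_{g'}^1[p]$ for every Dehn twist $T_c$ on $S_g^1$, which is a structural requirement on $\phi$ beyond what (1)--(3) alone stipulate. In the covering construction employed in the proof of Theorem \ref{thm:main}, $\phi$ sends each Dehn twist to a multitwist supported on the preimages of the twisted curve; because those preimages are pairwise disjoint, the corresponding Dehn twists commute, and so the $p$-th power of such a multitwist is itself a product of $p$-th powers of Dehn twists and thus lies in $\Mod_{g'}^1[p]$. Granting this additional property of $\phi$---which is a feature of the intended geometric application rather than a formal consequence of (1)--(3)---the factorization $\bar\Psi$ exists and the infinite-index claim follows.
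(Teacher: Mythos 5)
Your argument follows essentially the same route as the paper's own proof: $N=\langle\langle \proj^p_g(h^p)\rangle\rangle$ is infinite by (1), and infinite index is obtained by factoring $\proj^p_{g'}\circ\phi$ through the quotient $\left(\Mod_g^1/\Mod_g^1[p]\right)/N$ and invoking (3). The obstacle you single out is genuine and worth dwelling on: for the factorization one needs $\ker(\proj^p_{g'}\circ\phi)$ to contain the full preimage of $N$ in $\Mod_g^1$, namely $\Mod_g^1[p]\cdot\langle\langle h^p\rangle\rangle$, and hypothesis (2) only accounts for the second factor; the containment $\phi(\Mod_g^1[p])\subset\Mod_{g'}^1[p]$ is an additional structural requirement on $\phi$ that does not follow from (1)--(3). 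The paper's proof passes over this silently (its phrase ``again by (2)'' justifies only the normal closure of $h^p$), so you have in fact located a gap in the lemma as stated rather than introduced one. One caution about your proposed repair: in the covering construction used for Theorem \ref{thm:main}, $\phi(T_c)$ is the lift of the twist to the cover, which is in general a \emph{root} of a multitwist rather than a multitwist --- each annulus over the twisting annulus covers it with some degree $d_i$, and only $T_c^{m}$ with all $d_i\mid m$ lifts to a product of powers of the $T_{\tilde c_i}$, and then with exponents $m/d_i$ rather than $m$. So even granting the additional hypothesis, verifying $\phi(T_c^p)\in\Mod_{g'}^1[p]$ in the intended application requires more care than your sketch suggests. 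None of this is a defect of your argument relative to the paper's; your write-up is, if anything, more honest about what must be checked.
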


\begin{proof}
Write $N = \langle \langle \proj^p_g(h^p) \rangle \rangle$, which is obviously a normal subgroup of $\Mod_g^1/\Mod_g^1[p]$. Observe that $N$ is infinite, by (1). Next, $\phi(h)$ is a multitwist, by (2), so in particular  $\phi(h^p)=\phi(h)^p \in \ker(\proj^p_{g'})$. Consider \[\proj^p_{g'} \circ \; \phi: \Mod_g^1 \to \Mod_{g'}^1/\Mod_{g'}^1[p],\] which again by (2)  factors through the quotient  \[\left( \Mod_g^1/\Mod_g^1[p]\right )/N.\] Finally, this quotient is infinite since $\proj_{g'}^p \circ \hspace{.1cm} \phi$ has infinite image, by (3). 
\end{proof}

We can now prove Theorem \ref{thm:main}:

\begin{proof}[Proof of Theorem \ref{thm:main}]
Let $g\ge 2$. We first explain how to construct the group $Q_1$ from the statement. 

\medskip

First, there exists $h_1\in \pi_1(S_g)$ of infinite order in $\Mod_g^1/\Mod_g^1[p]$: this is a consequence of work by Koberda-Santharoubane \cite[Theorem 4.1]{KS} for large $p$, and Funar-Lochak \cite[Proof of Proposition 3.2]{FL}  
for all $p$ as in the hypotheses of Theorem \ref{thm:main}.

We are going  to produce an injective homomorphism $\Mod_g^1 \to \Mod_{g'}^1$ as in Lemma \ref{lem:mainlemma}, suited to this element $h_1$. (We remark that our arguments are heavily inspired by the construction of the {\em non-geometric} injective homomorphism between mapping class groups of \cite{ALS}.)

First, a result of Scott \cite{Scott} implies that there exists a finite-degree cover $S_{g''} \to S_g$ to which $h$ {\em lifts simply}; that is, there exists some $n \in \N$ such that $h_1^n\in \pi_1(S_{g''})$ and $h_1^n$ has no transverse self-intersections.  Now, the intersection $K$ of all subgroups of $\pi_1(S_g)$ of index $[\pi_1(S_g): \pi_1(S_{g''})]$ is a characteristic subgroup of $\pi_1(S_g)$. Denote by $S_{g'} \to S_g$ the cover corresponding to $K=\pi_1(S_{g'})$, and observe that perhaps by raising $h_1^n$ to a higher power, we can assume that $h_1^n\in  \pi_1(S_{g'})$. We stress that, viewed as an element of $\pi_1(S_{g'})$, the element $h_1^n$ would still be simple. 

Next, since $\pi_1(S_{g'})$ is a characteristic subgroup of $\pi_1(S_g)$  we may define a homomorphism \[\phi: \Aut(\pi_1(S_g)) \to \Aut(\pi_1(S_{g'}))\] by assigning, to every $f \in \Aut(\pi_1(S_g))$, its restriction to $\pi_1(S_{g'})$. Note that $\phi$ is injective since $\pi_1(S_{g'})$ has finite index in $\pi_1(S_g)$ and surface groups have unique roots.

We now check that conditions (1)--(3) of Lemma \ref{lem:mainlemma} are satisfied for the homomorphism $\phi$ and the element $h=h_1^n$ constructed above. First, (1) holds since we chose $h_1$ of infinite order in  $\Mod_g^1/\Mod_g^1[p]$. Next, as $h_1^n$ is a simple element of $\pi_1(S_{g'})$,   Theorem \ref{thm:kra} implies that $\phi(h_1^n)$ is a multitwist, yielding (2). Finally, in order to prove (3), observe that $\phi(\pi_1(S_{g'})) = \pi_1(S_{g'})$, by construction. In particular, $\phi(\pi_1(S_{g}))$ contains $\pi_1(S_{g'})$ as a subgroup of finite index and we can again apply the result of Koberda-Santharoubane \cite[Theorem 4.1]{KS} used above to deduce that (3) holds. 
At this point, Lemma \ref{lem:mainlemma} tells us that the group \[Q_1:=\langle\langle \proj^p_g(h^p) \rangle \rangle\] is an infinite normal subgroup of infinite index of $\Mod_g^1/\Mod_g^1[p]$. 

\medskip

In order to construct the rest of the groups in the desired descending normal series, we apply the above argument inductively. More concretely, since 
\[\proj^p_{g'} \circ \hspace{.1cm} \phi:\Mod_g^1 \to \frac{\Mod_{g'}^1}{\Mod_{g'}^1[p]}\] 
has infinite image, there exists $h_2 \in \pi_1(S_g)$ whose image in 
\[(\Mod_g^1/\Mod_g^1[p])/Q_1\] has infinite order.
 Now, since $\phi(h_2) \in \phi(\pi_1(S_g))$ and the latter  contains $\pi_1(S_{g'})$ as a subgroup of finite index, up to taking a power of $h_2$ we may assume that $\phi(h_2) \in \pi_1(S_{g'})$. We now apply the above arguments to $\phi(h_2)$ to construct an injective homomorphism \[\phi_2: \Mod_{g'}^1\to \Mod_{g_2}^1\] such that $\phi_2(\phi(h_2))$ is a multitwist; in particular, $\phi_2(\phi(h_2^p)) \in \ker(\proj^{p}_{g_2})$. Again by the arguments above, the map \[ \proj^{p}_{g_2} \circ \; \phi_2 \circ \phi\]  has infinite image, and therefore $\langle \langle \proj^p_g(h_2^{p}) \rangle\rangle$ is an infinite normal subgroup of infinite index in 
 $(\Mod_g^1/\Mod_g^1[p])/Q_1$. We then set \[Q_2:= \langle \langle \proj^p(h^p), \proj^p(h_2^{p}) \rangle\rangle.\] Repeating this process we construct the desired descending normal series $Q_1 \trianglerighteq Q_2\trianglerighteq \ldots$.
\end{proof}

\section{A Birman exact sequence for quotient subgroups}  
Before proceeding with the proof of Proposition \ref{thm:surface} we need the following result 
providing sufficient conditions for the exactness of a Birman sequence associated to quotient groups, whose proof will be postponed a few lines: 
\begin{lemma}\label{lem:exactness}
Let $\Pi\subset \pi_1(S_g)$ be a characteristic subgroup, $K^1\subset \Mod_g^1$ be a normal subgroup 
and $K\subset \Mod_g$ denote its image by the forgetful map. Assume that: 
\begin{equation}
K^1\subset \ker\left(\Mod_g^1\to \Aut\left(\frac{\pi_1(S_g)}{\Pi}\right)\right)
\label{cong}
\end{equation}
\begin{equation}
\Pi \subset \pi_1(S_g) \cap  K^1
\label{pi}
\end{equation}
Then the following generalized Birman sequence is exact:
\[ 1 \to A\to \frac{\pi_1(S_g)}{\Pi} \to \frac{\Mod_g^1}{K^1} \to \frac{\Mod_g}{K}\to 1\]
for some central subgroup $A$, namely $A\subseteq Z\left(\frac{\pi_1(S_g)}{\Pi}\right)$.
\end{lemma}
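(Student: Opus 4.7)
The approach is to derive the claimed exact sequence from the classical Birman sequence
$$1 \to \pi_1(S_g) \xrightarrow{p} \Mod_g^1 \xrightarrow{\pi} \Mod_g \to 1$$
by quotienting each term and checking that the induced arrows are well-defined and fit together exactly. First I would verify that the induced arrows exist. Hypothesis \eqref{pi} gives $p(\Pi)\subseteq K^1$, so $p$ descends to a homomorphism $\bar p : \pi_1(S_g)/\Pi\to\Mod_g^1/K^1$. Since $K$ is defined as the image of $K^1$ under $\pi$, the forgetful map descends to a surjection $\bar\pi:\Mod_g^1/K^1\to\Mod_g/K$. The identity $\bar\pi\circ\bar p=1$ is inherited from $\pi\circ p=1$.

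Exactness at the middle term is a short diagram chase: given $[f]\in\ker(\bar\pi)$, we have $\pi(f)\in K=\pi(K^1)$, so there exists $k\in K^1$ with $\pi(fk^{-1})=1$; the classical Birman sequence then yields $x\in\pi_1(S_g)$ with $p(x)=fk^{-1}$, and hence $[f]=\bar p([x])$. Exactness at the rightmost term reduces to surjectivity of $\bar\pi$, which is immediate from that of $\pi$.

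The crux, and the main obstacle, is the injectivity of $\bar p$, equivalently the inclusion $\pi_1(S_g)\cap K^1\subseteq \Pi$. This is where hypothesis \eqref{cong} enters. Identifying $\Mod_g^1\cong\Aut(\pi_1(S_g))$ via the algebraic Birman sequence \eqref{eq:birmanalg}, the point-pushing image $p(x)$ of $x\in\pi_1(S_g)$ acts on $\pi_1(S_g)$ as conjugation by $x$; since $\Pi$ is characteristic, this action descends to $\pi_1(S_g)/\Pi$, and hypothesis \eqref{cong} says $K^1$ lies in the kernel of this induced action. Applied to $x\in\pi_1(S_g)\cap K^1$, conjugation by $x$ is then trivial on $\pi_1(S_g)/\Pi$, i.e.\ the class of $x$ lies in the center $Z(\pi_1(S_g)/\Pi)$. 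Concluding $x\in\Pi$ thus amounts to verifying that this center is trivial; this is the delicate final input, which holds in the applications of interest, in particular when $\Pi=\pi_1(S_g)[p]$ for $g\geq 2$. Once this centerless-ness is in hand, injectivity of $\bar p$, and hence the full exactness of the generalized Birman sequence, follows at once.
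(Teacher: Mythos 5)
Your argument is essentially the paper's, presented more economically. The paper packages the key step into a three-row commutative diagram comparing the Birman sequence with $1\to \pi_1(S_g)/\Pi\to\Aut(\pi_1(S_g)/\Pi)\to\Out(\pi_1(S_g)/\Pi)\to 1$, and concludes $\Pi=\pi_1(S_g)\cap K^1$ by factoring the identity of $\pi_1(S_g)/\Pi$ through $\pi_1(S_g)/(\pi_1(S_g)\cap K^1)$; unwinding that diagram yields exactly your computation: for $x\in\pi_1(S_g)\cap K^1$, hypothesis \eqref{cong} forces conjugation by $x$ to act trivially on $\pi_1(S_g)/\Pi$, i.e.\ $[x]$ is central in $\pi_1(S_g)/\Pi$. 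Your routine verifications (well-definedness of the induced maps, exactness at the middle and right terms) are correct and match what the paper does implicitly.

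The step you defer --- triviality of the center of $\pi_1(S_g)/\Pi$ --- is where all the remaining content lies, and you are right that it does not follow from \eqref{cong} and \eqref{pi}. Be aware that the paper's own proof uses it silently: writing $\ker(q)=\pi_1(S_g)/\Pi$ and declaring the bottom row of its diagram exact both amount to identifying $\Inn(\pi_1(S_g)/\Pi)$ with $\pi_1(S_g)/\Pi$, which is precisely the centerlessness assumption. That this hypothesis cannot be dispensed with is shown by taking $\Pi=[\pi_1(S_g),\pi_1(S_g)]$ and $K^1$ the Torelli group of $S_g^1$: conditions \eqref{cong} and \eqref{pi} hold, yet the putative sequence $1\to\Z^{2g}\to \mathrm{Sp}(2g,\Z)\to \mathrm{Sp}(2g,\Z)\to 1$ is visibly not exact. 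So your proof is no less complete than the paper's; the genuine gap in both is that the lemma as stated needs the additional hypothesis $Z(\pi_1(S_g)/\Pi)=1$ (or at least that no nontrivial class of $(\pi_1(S_g)\cap K^1)/\Pi$ is central). Moreover, your parenthetical claim that this holds for $\Pi=\pi_1(S_g)[p]$, $g\ge 2$, is itself asserted without proof and is not obvious: for small $p$ the quotient $\pi_1(S_g)/\pi_1(S_g)[p]$ could a priori be finite or have nontrivial center, so the application to Proposition \ref{thm:surface} for all $p\ge 2$ still requires an argument.
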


Recall that $\pi_1(S_g)[p]$ denotes the subgroup of $\pi_1(S_g)$ generated by $p$-powers of {\em simple} elements of $S_g$; observe that $\pi_1(S_g)[p]$ is a characteristic subgroup of $\pi_1(S_g)$. 
We also need the following lemma, whose proof is postponed a few lines:

\begin{lemma}\label{lem:dehn}
The natural action of $\Mod_g^1[p]$ on $\frac{\pi_1(S_g)}{\pi_1(S_g)[p]}$ is trivial.
\end{lemma}

\begin{proof}[Proof of Proposition \ref{thm:surface}]
We want to apply the result of Lemma \ref{lem:exactness} to $\Pi=\pi_1(S_g)[p]$ and $K^1=\Mod_g^1[p]$. 
Using the description of the image of the point-pushing homomorphism for simple loops (see comment after Proposition \ref{thm:kra}), we know that $\pi_1(S_g)[p]\subset \pi_1(S_g)\cap \Mod_{g}^1[p]$, namely 
condition (\ref{pi}) holds. Further Lemma \ref{lem:dehn} shows that condition (\ref{cong}) is also satisfied and hence the claim follows. 
\end{proof}

\begin{proof}[Proof of Lemma \ref{lem:dehn}]
We start with some well-known observations, which are similar to \cite[Theorem 6.17]{C}. 
Let $a$ be a simple loop on $S_g$, viewed as an element of $\pi_1(S_g)$. If $a$ separates $S_g$ into two surfaces 
$S_{h,1}$ and $S_{g-h,1}$, then $\pi_1(S_g)=\pi_1(S_{h,1})*_{\Z} \pi_1(S_{g-h,1})$, where $\Z$ is the cyclic group generated by $a$. The orientation of the surface defines a right side, say $S_{h,1}$ and a left side for $a$. 
Then the (right) Dehn twist $T_{a}$ along $a$ is the automorphism determined by: 
\[ T_{a}(x)=\left\{\begin{array}{ll}
x, & {\rm if } \; x\in \pi_1(S_{h,1});\\
a x a^{-1}, & {\rm if } \; x\in \pi_1(S_{g-h,1});\\
\end{array}
\right.
\]
Thus 
\[ T_{a}^p(x)=\left\{\begin{array}{ll}
x, & {\rm if } \; x\in \pi_1(S_{h,1});\\
a^p x a^{-p}, & {\rm if } \; x\in \pi_1(S_{g-h,1});\\
\end{array}
\right.
\]
so that $T_{a}^p(x)\in x \pi_g[p]$, for any $x$. 
 
If $a$ is nonseparating, then we have a HNN splitting 
$\pi_1(S_g)=\pi_1(S_{g-1,2}) *_{\Z}$, where $\Z$ is generated by some element $t$ corresponding to a primitive (hence simple) loop 
intersecting $a$ once. Further the  Dehn twist $T_{a}$ along $a$ is the automorphism determined by: 
\[ T_{a}(x)=\left\{\begin{array}{ll}
x, & {\rm if } \; x\in \pi_1(S_{g-1,2});\\
t a, & {\rm if } \; x=t;\\
\end{array}
\right.
\]
Then 
\[ T_{a}^p(x)=\left\{\begin{array}{ll}
x, & {\rm if } \; x\in \pi_1(S_{g-1,2});\\
t a^p, & {\rm if } \; x=t;\\
\end{array}
\right.
\]
and so $T_{a}^p(x)\in x \pi_g[p]$, for any $x$. 
This finishes the proof of the lemma.
\end{proof}

We are now ready to prove Lemma \ref{lem:exactness}:

\begin{proof}[Proof of Lemma \ref{lem:exactness}]
By hypothesis, there exist well-defined homomorphisms:
\begin{equation}
\psi: \frac{\Mod_{g}^1}{K^1} \to \Aut\left(\frac{\pi_1(S_g)}{\Pi}\right)
\label{eq:p}
\end{equation}
and 
\begin{equation}
\overline{\psi}:\frac{\Mod_{g}}{K} \to \Out\left(\frac{\pi_1(S_g)}{\Pi}\right),
\label{eq:p2}
\end{equation}
which fit in a commutative diagram 

\begin{equation}
\xymatrix{
\frac{\Mod_{g}^1}{K^1} \ar[r]^{f} \ar[d]^{\psi} &
\frac{\Mod_{g}}{K} \ar[d]^{\overline{\psi}} \\
\Aut\left(\frac{\pi_1(S_g)}{\Pi}\right) \ar[r]^{q} & \Out\left(\frac{\pi_1(S_g)}{\Pi}\right).
}
\label{eq:diag1}
\end{equation}
In the above diagram, the homomorphism $f$ is the one induced by the forgetful homomorphism 
of Section \ref{sec:ses}, and the homomorphism $q$ is the obvious quotient map. We claim: 

\medskip

\noindent{\bf Claim.} The image of $ \frac{\pi_1(S_g)}{\pi_1(S_g) \cap K^1} < \frac{\Mod_{g}^1}{K^1} $ under the homomorphism $q \circ \psi$ is trivial. 

\begin{proof}[Proof of claim.]
First, the exactness of the Birman sequence \eqref{eq:birman} implies that \[f \left( \frac{\pi_1(S_g)}{\pi_1(S_g)\cap K^1} \right )\] is trivial, and therefore \[(\overline{\psi}\circ f) \left( \frac{\pi_1(S_g)}{\pi_1(S_g)\cap K^1} \right )\] is trivial too. Since the diagram \eqref{eq:diag1} is commutative, we deduce that 
\begin{equation}
\psi \left( \frac{\pi_1(S_g)}{\pi_1(S_g)\cap K^1} \right ) < \ker(q)= \frac{\frac{\pi_1(S_g)}{\Pi}}{Z\left(\frac{\pi_1(S_g)}{\Pi}\right)}, 
\end{equation}
which in particular implies the desired result. 
\end{proof}
From the claim above, we deduce that there is a diagram

\[\begin{array}{ccccccccc}
 1& \to & \pi_1(S_g) & \to & \Mod_g^1&\to & \Mod_g &\to& 1\\
  &  & \downarrow & & \downarrow & & \downarrow & & \\
 1& \to & \frac{\pi_1(S_g)}{\pi_1(S_g)\cap K^1} & \to & \frac{\Mod_{g}^1}{K^1}&\to & 
 \frac{\Mod_{g}}{K} &\to& 1\\
  &  & \downarrow & & \downarrow & & \downarrow & & \\
 1 & \to & \frac{\frac{\pi_1(S_g)}{\Pi}}{Z\left(\frac{\pi_1(S_g)}{\Pi}\right)} & \to & \Aut\left(\frac{\pi_1(S_g)}{\Pi}\right) & \to &\Out\left(\frac{\pi_1(S_g)}{\Pi}\right) & \to & 1 \\
 \end{array}
 \] 
 with exact rows.
  From condition (\ref{pi})  we have a surjective homomorphism 
   \begin{equation}
\alpha: \frac{\pi_1(S_g)}{\Pi}\to \frac{\pi_1(S_g)}{\pi_1(S_g)\cap K^1} 
\label{eq:q}
\end{equation} 
    It follows that that upper leftmost vertical arrow in the diagram above, which is the projection homomorphism 
\[\pi_1(S_g)\to \frac{\pi_1(S_g)}{\pi_1(S_g)\cap K^1},\] 
factors as
\[\pi_1(S_g)\to \frac{\pi_1(S_g)}{\Pi}\stackrel{\alpha}{\to} \frac{\pi_1(S_g)}{\pi_1(S_g)\cap K^1}.\] 
 Now, note that the composition of the two leftmost vertical arrows in the diagram above is the natural projection 
\[\pi_1(S_g)\to \frac{\frac{\pi_1(S_g)}{\Pi}}{Z\left(\frac{\pi_1(S_g)}{\Pi}\right)},\] 
which then factors as the composition of  homomorphisms: 
\[\pi_1(S_g)\to  \frac{\pi_1(S_g)}{\Pi}\stackrel{\alpha}{\to} \frac{\pi_1(S_g)}{\pi_1(S_g)\cap K^1}\stackrel{\psi}{\to} \frac{\frac{\pi_1(S_g)}{\Pi}}{Z\left(\frac{\pi_1(S_g)}{\Pi}\right)}\] 
Therefore  the projection map \[\frac{\pi_1(S_g)}{\Pi}\to \frac{\frac{\pi_1(S_g)}{\Pi}}{Z\left(\frac{\pi_1(S_g)}{\Pi}\right)}\] factors as 
\[\frac{\pi_1(S_g)}{\Pi}\stackrel{\alpha}{\to} \frac{\pi_1(S_g)}{\pi_1(S_g)\cap K^1}\stackrel{\psi}{\to} \frac{\frac{\pi_1(S_g)}{\Pi}}{Z\left(\frac{\pi_1(S_g)}{\Pi}\right)}.\]
Since $\alpha$ is surjective, there are induced homomorphisms $\alpha^\sharp$ and $\psi^\sharp$
such that the composition 
\[\frac{\frac{\pi_1(S_g)}{\Pi}}{Z\left(\frac{\pi_1(S_g)}{\Pi}\right)}\stackrel{\alpha^\sharp}{\to} 
\frac{\frac{\pi_1(S_g)}{\pi_1(S_g)\cap K^1}}{Z\left(\frac{\pi_1(S_g)}{\pi_1(S_g)\cap K^1}\right)}
\stackrel{\psi^\sharp}{\to} \frac{\frac{\pi_1(S_g)}{\Pi}}{Z\left(\frac{\pi_1(S_g)}{\Pi}\right)}\]  
is the identity.  Observe that  $\alpha^\sharp$ is also surjective, which implies that both maps  $\alpha^\sharp$ and $\psi^\sharp$ 
are isomorphisms. Therefore $\ker(\alpha)$ is a subgroup of $Z\left(\frac{\pi_1(S_g)}{\Pi}\right)$. 
Then the claim follows. 
\end{proof}

Recall that $\pi_1(S_g)[p]$ denotes the subgroup of $\pi_1(S_g)$ generated by $p$-powers of {\em simple} elements of $S_g$; observe that $\pi_1(S_g)[p]$ is a characteristic subgroup of $\pi_1(S_g)$. 
We also need the following lemma:

\begin{lemma}\label{lem:dehn}
The natural action of $\Mod_g^1[p]$ on $\frac{\pi_1(S_g)}{\pi_1(S_g)[p]}$ is trivial.
\end{lemma}

We will denote by $S_{g,n}$ the orientable surface of genus $g$ with $n$ boundary components.

\begin{proof}
We start with some well-known observations, which are similar to \cite[Theorem 6.17]{C}. 
Let $a$ be a simple loop on $S_g$, viewed as an element of $\pi_1(S_g)$. If $a$ separates $S_g$ into two surfaces 
$S_{h,1}$ and $S_{g-h,1}$, then $\pi_1(S_g)=\pi_1(S_{h,1})*_{\Z} \pi_1(S_{g-h,1})$, where $\Z$ is the cyclic group generated by $a$. The orientation of the surface defines a right side, say $S_{h,1}$ and a left side for $a$. 
Then the (right) Dehn twist $T_{a}$ along $a$ is the automorphism determined by: 
\[ T_{a}(x)=\left\{\begin{array}{ll}
x, & {\rm if } \; x\in \pi_1(S_{h,1});\\
a x a^{-1}, & {\rm if } \; x\in \pi_1(S_{g-h,1});\\
\end{array}
\right.
\]
Thus 
\[ T_{a}^p(x)=\left\{\begin{array}{ll}
x, & {\rm if } \; x\in \pi_1(S_{h,1});\\
a^p x a^{-p}, & {\rm if } \; x\in \pi_1(S_{g-h,1});\\
\end{array}
\right.
\]
so that $T_{a}^p(x)\in x \pi_g[p]$, for any $x$. 
 
If $a$ is nonseparating, then we have a HNN splitting 
$\pi_1(S_g)=\pi_1(S_{g-1,2}) *_{\Z}$, where $\Z$ is generated by some element $t$ corresponding to a primitive (hence simple) loop 
intersecting $a$ once. Further the  Dehn twist $T_{a}$ along $a$ is the automorphism determined by: 
\[ T_{a}(x)=\left\{\begin{array}{ll}
x, & {\rm if } \; x\in \pi_1(S_{g-1,2});\\
t a, & {\rm if } \; x=t;\\
\end{array}
\right.
\]
Then 
\[ T_{a}^p(x)=\left\{\begin{array}{ll}
x, & {\rm if } \; x\in \pi_1(S_{g-1,2});\\
t a^p, & {\rm if } \; x=t;\\
\end{array}
\right.
\]
and so $T_{a}^p(x)\in x \pi_g[p]$, for any $x$. 
This finishes the proof of the lemma.
\end{proof}

We are now in a position to prove Proposition  \ref{thm:surface} and Corollary \ref{cor:surfintermediary}.

\begin{proof}[Proof of Proposition \ref{thm:surface}]
We want to apply the result of Lemma \ref{lem:exactness} to $\Pi=\pi_1(S_g)[p]$ and $K^1=\Mod_g^1[p]$. 
Using the description of the image of the point-pushing homomorphism for simple loops (see comment after Proposition \ref{thm:kra}), we know that $\pi_1(S_g)[p]\subset \pi_1(S_g)\cap \Mod_{g}^1[p]$, namely 
condition (\ref{pi}) holds. Further Lemma \ref{lem:dehn} shows that condition (\ref{cong}) is also satisfied and hence the claim follows. 
\end{proof}

\begin{proof}[Proof of Corollary \ref{cor:nonlatticepi}]
In the proof of Theorem \ref{thm:main}, all of the elements $h_i$ constructed belong to $\pi_1(S_g)$. 
Then the subgroups 
\[ \overline{Q}_i = Q_i \cap \frac{\pi_1(S_g)}{\pi_1(S_g)[p]}\]
are normal subgroups of $\frac{\pi_1(S_g)}{\pi_1(S_g)[p]}$ thus forming a descending normal series. 
Each $\overline{Q}_i$ is infinite and of infinite index into $\overline{Q}_{i-1}$ by the proof above, based on the 
Koberda-Santharoubane infiniteness statement  \cite[Theorem 1.1]{KS} .  We have: 
\[\langle\langle \proj^p_g(h_1^p),\ldots,\proj^p_g(h_j^{p})\rangle\rangle
\subset \frac{\pi_1(S_g)}{\pi_1(S_g)[p]}.\]
In fact, the groups \[\langle\langle \proj^p_g(h_1^p),\ldots,\proj^p_g(h_j^{p})\rangle\rangle
\] are characteristic subgroups of $\frac{\pi_1(S_g)}{\pi_1(S_g)[p]}$, and in particular they are also normal subgroups 
in $\frac{\Mod{g}^1}{\Mod_{g}^1[p]}$. 
\end{proof}
\begin{remark}
The subgroups 
 \[\langle\langle \proj^p_g(h_1^p),\ldots,\proj^p_g(h_j^{p})\rangle\rangle_{\Mod_g^1}\cap \pi_1(S_g)\] 
are normal subgroups in $\Mod_{g}^1$, in particular they form a series of characteristic 
subgroups of $\pi_1(S_g)$. 
\end{remark}

\section{Virtually K\"ahler groups}
In this section we prove Theorem \ref{thm:kahler}. We refer the reader to \cite{FM,Hubbard}, as well as to the references below, for standard facts about moduli space.

 The analytic moduli space $\mathcal M_g^{an}$  of smooth genus $g$ algebraic curves is 
an orbifold with orbifold fundamental group $\Mod_g$. It is a classical result of Serre \cite{Serre} that 
$\mathcal M_g^{an}$ has finite-degree unramified coverings which are topological manifolds, e.g.  those
obtained by quotienting  Teichm\"uller space by the stabilizer of an abelian level $k\geq 3$ structure, namely 
\[ \ker(\Mod_g\to \Aut(H_1(S_g,\Z/k\Z)).\] 
There is a standard K\"ahler structure on Teichm\"uller space, namely the one induced by the Weil-Petersson metric. In particular, each of the finite-degree unramified covers of $\mathcal M_g^{an}$ mentioned above also admits a K\"ahler structure. However, it should be noted that this fact does not imply that $\Mod_g$ is virtually K\"ahler, as the action of the relevant group on the appropriate cover of  $\mathcal M_g^{an}$ is not cocompact.

The Deligne-Mumford compactification $\overline{\mathcal M}_g^{an}$ of 
$\mathcal M_g$ is the analytic space underlying the moduli stack $\overline{\mathcal M}_g$ of 
stable curves of genus $g$. Looijenga \cite{Loo} and later several other authors 
proved that $\overline{\mathcal M}_g ^{an}$ also admits finite Galois covers 
which are topological manifolds.

Deligne and Mumford \cite{DM} introduced  the moduli stack  $_G{\mathcal M}_g$ of smooth genus $g$ algebraic 
curves with a Teichm\"uller structure of level $G$, where $G$ is any finite characteristic quotient 
$G$ of $\pi_1(S_g)$, and considered its underlying moduli space $_G\mathcal M_g^{an}$, which in turn parametrizes 
smooth curves with a $G$-structure. They further defined the compactification 
$_G\overline{\mathcal M}_g^{an}$  of $_GM_g^{an}$ as the normalization of  $\overline{\mathcal M}_g$ 
with respect to  $_G\mathcal M_g^{an}$.

For suitable choices of the finite group $G$, the 
analytic space  $_G\overline{\mathcal M}_g^{an}$ is a  compact  smooth complex manifold (see 
\cite{Boggi,BP,Loo,PJ}). Moreover, the  map  forgetting the level structure $_G\overline{\mathcal M}_g^{an}\to \overline{\mathcal M}_g^{an}$  is a Galois covering ramified along the divisor at infinity. 
In order to prove Theorem \ref{thm:kahler} we will compute 
$\pi_1(_G\overline{\mathcal M}_g^{an})$ in such cases. 
For the sake of conciseness, most arguments borrowed from  \cite{Boggi,BP,Loo,PJ} are only sketched below. 

We consider, after Pikaart-Jong \cite{PJ},  the family of characteristic subgroups 
\[ \pi_g(k,p)=\gamma_{k+1}(\pi_1(S_g))\cdot  \pi_1(S_g)^p,\]
where $\pi_1(S_g)^p$ denotes the subgroup of $\pi_1(S_g)$ generated by all $p$-th powers, and $\gamma_k$ denotes the $k$-th term of the lower central series of a group $G$, namely the one defined as 
$\gamma_1(G)=G$ and $\gamma_{k+1}(G)=[\gamma_k(G),G]$, for $k\geq 1$. 
 We stress that the groups $\pi_g(k,p)$ are finite index subgroups of $\pi_1(S_g)$, as nilpotent groups with generators of finite order are finite. We set:
\[\Mod_g(k,p)= \ker(\Mod_g\to \Out(\pi_1(S_g)/\pi_g(p,k))\]

Now, the following result is  due to Pikaart-Jong \cite{PJ}:

\begin{theorem}[\cite{PJ}]
The analytic space $_{\pi_g(p,k)}\overline{\mathcal M}_g^{an}$ is smooth 
if $k\geq 4$, $p\geq 3$ and $\gcd(p,6)=1$, if $k=1$ and $g=2$, if $k=2$ and $p$ is odd, 
if $k=3$ and $p$ is either odd or divisible by $4$.   
\end{theorem}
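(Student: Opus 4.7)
The plan is to establish smoothness of $_{\pi_g(p,k)}\overline{\mathcal M}_g^{an}$ by a local analysis at every point, the nontrivial work being at the boundary, following the strategy of Pikaart--Jong \cite{PJ} and Looijenga \cite{Loo}. Fix a stable curve $C$ with $n$ nodes and a corresponding system of vanishing cycles $\gamma_1,\ldots,\gamma_n\subset S_g$. The formal neighbourhood of $[C]$ in $\overline{\mathcal M}_g^{an}$ is a quotient $\Delta^{3g-3}/\mathrm{Aut}(C)$, in which the $n$ coordinates transverse to the boundary divisor are smoothing parameters for the individual nodes and each Dehn twist $T_{\gamma_i}$ acts as a complex reflection in one of these coordinates. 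Pulling this model back along $_{\pi_g(p,k)}\overline{\mathcal M}_g^{an}\to \overline{\mathcal M}_g^{an}$ yields a local model of the form $\Delta^{3g-3}/\Gamma_C$, where $\Gamma_C$ is the subgroup of $\mathrm{Aut}(C)$, extended by the Dehn twists around the $\gamma_i$, that preserves the $\pi_g(p,k)$-structure on the central fibre.

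By the Chevalley--Shephard--Todd theorem, $\Delta^{3g-3}/\Gamma_C$ is smooth precisely when $\Gamma_C$ is generated by complex pseudo-reflections. The only elements of $\Gamma_C$ that can act as pseudo-reflections on the transverse coordinates are powers of the $T_{\gamma_i}$; hence smoothness of $_{\pi_g(p,k)}\overline{\mathcal M}_g^{an}$ at the chosen point is equivalent to the assertion that no nontrivial automorphism of $C$ acts trivially on $\pi_1(S_g)/\pi_g(p,k)$, once one mods out by the Dehn twists around vanishing cycles that already lie in $\pi_g(p,k)$. This reduction applies uniformly: at interior points one simply takes $n=0$, recovering the classical criterion used in \cite{DM, Serre}.

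The remaining task is an arithmetic-topological analysis of the action of finite-order automorphisms of $C$ on the nilpotent-mod-$p$ quotient $\pi_1(S_g)/\pi_g(p,k)$. A derivation argument on the lower central series shows that an automorphism of order coprime to $p$ that acts trivially on $\pi_1(S_g)/(\gamma_{k+1}(\pi_1(S_g))\cdot\pi_1(S_g)^p)$ must already be inner, hence trivial after outerization; combined with classical bounds on orders of automorphisms of closed Riemann surfaces (notably the hyperelliptic involution in genus $2$ and the periods $2,3,6$ of elliptic elements), one rules out the remaining obstructions. For $k\ge 4$ the hypothesis $\gcd(p,6)=1$ is exactly what kills the order-$2$ and order-$3$ automorphisms that survive modulo $\gamma_5$, while the exceptional small-$k$ cases are handled by explicit computation inside the corresponding low-degree nilpotent quotients, which is where the parity and divisibility-by-$4$ conditions enter.

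The main obstacle will be this low-$k$ case analysis: when $k\in\{1,2,3\}$ the quotient $\pi_1(S_g)/\pi_g(p,k)$ is too small to separate all automorphisms of $C$, so the verification must proceed through structural facts about hyperelliptic curves (for $g=2$, $k=1$) and careful congruence arguments modulo $2$ and $4$. This combinatorial bookkeeping, rather than the overall reflection-theoretic framework, is what forces the non-uniform statement of the theorem.
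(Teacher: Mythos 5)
This theorem is not proved in the paper at all: it is quoted verbatim from Pikaart--de Jong \cite{PJ}, so there is no internal proof to compare your attempt against. Judged on its own terms, your proposal is a reasonable reconstruction of the general strategy of \cite{PJ} and \cite{Loo} (local deformation model $\Delta^{3g-3}/\mathrm{Aut}(C)$ at a stable curve, pull back along the level cover, decide smoothness of the resulting finite quotient), but it is a plan rather than a proof, and the one step you do formulate precisely is not formulated correctly. You assert that smoothness at a boundary point is \emph{equivalent} to the statement that no nontrivial automorphism of $C$ acts trivially on $\pi_1(S_g)/\pi_g(p,k)$ after modding out the vanishing-cycle twists. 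The actual criterion is finer: the local model of the normalization ${}_{\pi_g(k,p)}\overline{\mathcal M}_g^{an}$ near $[C]$ is governed by the image of the full local monodromy group (generated by the $T_{\gamma_i}$ \emph{together with} lifts of $\mathrm{Aut}(C)$) acting on the fibre of the level structure, and smoothness requires that the kernel of this local monodromy be generated by the individual powers $T_{\gamma_i}^{m_i}$ — which is precisely the content of the separate monodromy computation ([PJ], Thm.~3.1.3) that the paper also quotes as Theorem \ref{thm:monodromy}. Your reduction collapses these two statements into one and in doing so loses the condition that actually produces the hypotheses $\gcd(p,6)=1$, etc.

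Concretely, the gaps are: (i) the claim that the only possible pseudo-reflections in $\Gamma_C$ are powers of the $T_{\gamma_i}$ is unjustified — automorphisms of the stable curve permute and rotate the smoothing coordinates, and controlling their contribution is a substantial part of \cite{PJ}; (ii) the ``derivation argument'' showing that an automorphism acting trivially on $\pi_1(S_g)/(\gamma_{k+1}\cdot\pi_1(S_g)^p)$ is inner is stated without proof and without the hypotheses under which it holds; (iii) the source of the constants $2$ and $3$ is misattributed — they come from automorphisms of the low-genus \emph{components} of the stable curve (elliptic tails, the hyperelliptic involution on genus-$1$ and genus-$2$ pieces) interacting with the boundary twists, not from bounds on automorphisms of the closed smooth surface $S_g$; and (iv) the low-$k$ cases, which you yourself identify as the crux, are deferred entirely to ``explicit computation.'' As written, the proposal identifies the right framework but does not constitute a proof of the theorem.
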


Note that $\pi_1(_{\pi_g(k,p)}{\mathcal M}_g^{an})=\Mod_g(k,p)$. In order to compute the fundamental 
group $\pi_1(_{\pi_g(k,p)}\overline{\mathcal M}_g^{an})$ of the compactification  
it suffices to know the monodromy along the boundary 
on the relative fundamental group of the universal curve over $\mathcal M_g^{an}$. 
In fact the orbifold homotopy class of a  loop in $\mathcal M_g^{an}$ coincides with the 
class of the monodromy of the universal curve along that loop, when the orbifold fundamental group of $\mathcal M_g^{an}$ is identified with $\Mod_g$.

Let $C$ be a complex stable genus $g$ curve with singular points $p_1,\ldots,p_m$. 
The stable curve $C$ is the singular fiber of a local universal deformation family of stable curves over the polydisk in $\C^{3g-3}$.  Choose local coordinates  $z_i$ such that $z_j=0$ describes stable curves for which  $p_j$ is  a singular point.  Then, the universal family has smooth fibers outside the discriminant locus given by the equation: 
\[ z_1 z_2 \cdots z_m =0\]

Vanishing cycles in the universal family correspond to a set $c_1,c_2,\ldots,c_m$ of 
simple closed curves on the generic smooth fiber, which are shrunk to the singular points 
$p_1,p_2,\ldots,p_m$ of the singular fiber. If $U$ denotes the complement of the discriminant, then
$\pi_1(U)$ is the abelian group generated by classes $[\gamma_j]$ of loops  $\gamma_j$ encircling the complex hyperplane 
$z_j=0$ exactly once, and in the positive direction. Moreover, the monodromy $\rho$  of the universal curve is given by  
\[ \rho([\gamma_i])=T_{c_i}\]
where $T_{c_i}$ denotes the right Dehn twist along the curve $c_i$. 
In other words, the orbifold homotopy classes of the loops $[\gamma_i]$ in $\mathcal M_g^{an}$ 
are the classes $T_{c_i}$.

Let $D\subset \overline{\mathcal M}_g^{an}$ be the divisor at infinity.  Recall  (\cite{DM}, Thm.5.2)  that $D$ is a normal crossing divisor in the orbifold sense, see (\cite{ACGH} chap. XII) for a detailed description both from orbifold and stack point of view. 
Note that $U$ is homeomorphic to the intersection of 
$ \overline{\mathcal M}_g^{an}-D$ with a neighbourhood of the  point $[C]$ within $\overline{\mathcal M}_g^{an}$. 
A similar description exists for neighbourhoods of points of $_{\pi_g(k,p)}\overline{\mathcal M}_g^{an}$ lying 
above $[C]$.
  
We will use then the following lemma (see \cite[Section 7]{Fox}):
\begin{lemma}\label{lem:fox}
Let $Y$ be a connected barycentrically-subdivided locally-finite complex, and let $K$ be a subcomplex such that, for each vertex $v$ of $K$, the intersection $S(v)$ of $Y-K$ with the open star of $v$ in $Y$ is nonempty and connected. 
Then the  homomorphism  $\pi_1(Y-K)\to \pi_1(Y)$ 
induced by the natural inclusion is surjective. Moreover, its kernel is the normal closure of those elements in $\pi_1(Y-K)$ that 
can be represented by loops in $\cup_v S(v)$.  
\end{lemma}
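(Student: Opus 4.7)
The plan is to establish this as a van Kampen-type theorem with respect to the open cover of $Y$ by $U := Y - K$ and the neighborhood $V := \bigcup_{v} \text{st}(v)$ of $K$, where the union runs over vertices of $K$ and $\text{st}(v)$ denotes the open star in $Y$. Two simple facts drive the argument: each star $\text{st}(v)$ is contractible, being an open cone on the link of $v$; and because $K$ is a subcomplex of the barycentrically subdivided complex $Y$, every point of $K$ lies in the interior of some simplex of $K$, hence in the open star of one of its vertices, so $V$ contains $K$. Consequently $U \cap V = \bigcup_v S(v)$, and each $S(v)$ is nonempty and path-connected by hypothesis.

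For surjectivity, I would take a loop $\gamma$ in $Y$ based at a point of $Y - K$, apply simplicial approximation (using local finiteness of $Y$) to arrange $\gamma$ simplicial with respect to a fine subdivision, and then, for each vertex $v \in K$ encountered by $\gamma$, replace every maximal subarc of $\gamma$ lying in $\text{st}(v)$ by an arc with the same endpoints but contained in $S(v)$. Such a replacement exists because $S(v)$ is path-connected and, after a small perturbation, the endpoints may be assumed to lie in $S(v)$; the two arcs are then homotopic rel endpoints inside the contractible star. Finitely many such replacements push $\gamma$ into $U$ without changing its class in $\pi_1(Y)$. For the kernel, given a null-homotopy $F \colon D^2 \to Y$ of a loop $\gamma \subset U$, I would subdivide $D^2$ finely enough that each closed $2$-simplex maps either into $U$ or into a single star $\text{st}(v)$, and then apply the same pushing procedure to every edge of the $1$-skeleton lying in a star. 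The resulting $1$-skeleton of the modified $F$ lies in $U$, and each $2$-cell bounds a loop that is either trivial in $U$ or entirely contained in some $S(v)$. Reading off $\gamma$ around $\partial D^2$ as a word in these cell-boundary loops, conjugated by paths in the $1$-skeleton back to the basepoint, expresses $\gamma$ in $\pi_1(U)$ as a product of conjugates of loops in $\bigcup_v S(v)$. The reverse inclusion is immediate, since any loop in $S(v)$ is null-homotopic in the contractible ambient star $\text{st}(v) \subset Y$.

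The main obstacle is coordinating the pushing-off when the stars $\text{st}(v)$ and $\text{st}(w)$ for distinct $v, w \in K$ overlap. The barycentric-subdivision hypothesis is precisely what keeps this manageable: any two overlapping vertex stars of $K$ correspond to vertices spanning a common simplex of $K$, so the local combinatorics reduce to that of a standard simplex and the replacement arcs can be chosen consistently one vertex at a time, working outward from the lowest-dimensional strata of $K$. Local finiteness of $Y$ ensures that only finitely many stars need to be treated along any given loop or disk, so the procedure terminates in a single coherent homotopy witnessing both the surjectivity and the kernel description.
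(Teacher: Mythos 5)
First, a point of comparison: the paper does not prove this lemma at all --- it is quoted from Section 7 of Fox's \emph{Covering spaces with singularities} --- so your reconstruction can only be measured against Fox's argument and on its own terms. Your overall architecture (cover $Y$ by $U=Y-K$ and the open stars of the vertices of $K$, use contractibility of stars and connectedness of the sets $S(v)$ to push loops and null-homotopies off $K$, and read the kernel off the $2$-skeleton of a subdivided disk) is the standard and correct strategy, and your reverse inclusion for the kernel is fine, provided one reads ``loops in $\cup_v S(v)$'' as ``loops each contained in a single $S(v)$'' --- which is the intended meaning (and is what your disk argument actually produces; for loops merely contained in the union the statement would be false, e.g.\ for $K=S^1\times\{pt\}$ inside $S^1\times S^2$).

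The genuine gap is the sentence ``after a small perturbation, the endpoints may be assumed to lie in $S(v)$.'' The endpoints of a maximal subarc of $\gamma$ lying in the open set $\operatorname{st}(v)$ sit on its frontier, hence outside $\operatorname{st}(v)$ and a fortiori outside $S(v)$; and shrinking the subarc does not help when $\gamma$ travels through $K$ itself. Concretely, if $\gamma$ runs along an edge path $u\to v\to w$ inside $K$, every sub-subarc near $v$ still lies in $K$, and rerouting it requires a point of $(Y-K)\cap\operatorname{st}(v)\cap\operatorname{st}(w)$ to serve as the new transition point. The hypotheses only assert that each individual $S(v)$ is nonempty and connected; they say nothing directly about these pairwise overlaps, and establishing that they are nonempty and that the rerouting can be done coherently is exactly where the barycentric-subdivision hypothesis must be exploited. (Note also that your claim that overlapping stars of vertices of $K$ span a common simplex \emph{of $K$} is false in general --- $K$ need not be a full subcomplex of $Y$ --- they span a simplex of $Y$.) Your closing paragraph correctly names this coordination problem, but the proposed fix (``one vertex at a time, working outward from the lowest-dimensional strata'') is a gesture rather than an argument, and the same issue recurs, compounded, when you push the $1$-skeleton of the subdivided disk off $K$. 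So the proof is incomplete precisely at the step that carries the whole content of the lemma.
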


With the above lemma to hand, we now choose a triangulation of $\overline{\mathcal M}_g^{an}$ for which $D$ is a subcomplex, 
which in turn induces a triangulation of $_{\pi_g(k,p)}\overline{\mathcal M}_g^{an}$. 
We want to use Lemma \ref{lem:fox}, by taking for $Y$ the  triangulation $_{\pi_g(k,p)}\overline{\mathcal M}_g^{an}$, and for $K$ 
the triangulation of the divisor at infinity.  

Note that the $S(v)$ are pairwise disjoint in the statement of Lemma \ref{lem:fox}. As such, each $S(v)$ is homeomorphic to $U$. 
It remains to identify the classes of the lifted  loops $\gamma_j$ in $\pi_1(_{\pi_g(k,p)}\overline{\mathcal M}_g^{an})$. 
Equivalently, we need to find  the kernel of the local monodromy  representation for $_{\pi_g(k,p)}\overline{\mathcal M}_g^{an}$ 
over a neighbourhood of $[C]$ in $\overline{\mathcal M}_g^{an}$. 
This has been already done in \cite[Thm. 3.1.3]{PJ}:

\begin{theorem}[\cite{PJ}]\label{thm:monodromy}
If $k\geq 4$ and $\gcd(p,6)=1$, then 
the kernel of the monodromy is generated by the classes of the form $T_{c_i}^p$. 
\end{theorem}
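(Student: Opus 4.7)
The plan is to reduce Theorem~\ref{thm:monodromy} to a purely group-theoretic statement about $\Out(\pi_1(S_g)/\pi_g(k,p))$. The local universal family gives a monodromy homomorphism $\rho\colon \pi_1(U)\cong \Z^m\to\Mod_g$ sending $[\gamma_i]\mapsto T_{c_i}$; composing with the projection $\Mod_g\to\Out(\pi_1(S_g)/\pi_g(k,p))$ yields the monodromy attached to the cover ${}_{\pi_g(k,p)}\overline{\mathcal M}_g^{an}$, and the kernel of interest is precisely the kernel of this composite. Since $\pi_1(U)$ is free abelian, identifying this kernel with $\bigoplus_i p\Z[\gamma_i]$ reduces to two claims:
\[
\text{(i)}\;\; T_{c_i}^p\in\Mod_g(k,p)\text{ for each } i, \qquad
\text{(ii)}\;\; (\Z/p)^m\to\Out(\pi_1(S_g)/\pi_g(k,p))\text{ is injective.}
\]

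Claim~(i) follows by a direct adaptation of Lemma~\ref{lem:dehn}: for any simple closed curve $a$, the formulas recalled there show that $T_a^p$ fixes each generator of $\pi_1(S_g)$ modulo right-multiplication or conjugation by $a^p\in\pi_1(S_g)^p\subset\pi_g(k,p)$. Hence $T_a^p$ acts as the identity on $\pi_1(S_g)/\pi_g(k,p)$, and a fortiori trivially in $\Out$, giving $T_{c_i}^p\in\Mod_g(k,p)$.

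For claim~(ii) I would split according to the topological type of each vanishing cycle. The non-separating cycles can be detected on the first graded piece $\pi_1(S_g)/\pi_g(1,p)=H_1(S_g,\Z/p)$, on which $T_{c_i}$ acts as the symplectic transvection $x\mapsto x+\langle x,c_i\rangle\,c_i$; since pairwise disjoint simple closed curves have homology classes spanning an isotropic subspace of $H_1(S_g,\Z/p)$, the resulting transvections are $\Z/p$-linearly independent in $\operatorname{Sp}(H_1(S_g,\Z/p))$. Separating vanishing cycles are invisible on $H_1$, so one detects them in the second graded piece $\gamma_2\pi_1(S_g)/(\gamma_3\pi_1(S_g)\cdot\pi_1(S_g)^p)$, where each separating $c_i$ represents a non-trivial product of commutators supported on one of the two subsurfaces it bounds; the fact that the $c_j$ are pairwise disjoint forces these classes to lie in disjoint blocks of a Lie-algebraic decomposition of the graded quotient, so they are $\Z/p$-linearly independent. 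Once (i) and (ii) are in hand, Lemma~\ref{lem:fox} applied to $Y={}_{\pi_g(k,p)}\overline{\mathcal M}_g^{an}$ with $K$ the divisor at infinity identifies $\ker\bigl(\pi_1(Y\setminus K)\to\pi_1(Y)\bigr)$ with the normal closure of loops in the star neighbourhoods of boundary points, which is the normal closure of the $T_{c_i}^p$.

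The main obstacle is the separating case of (ii). Showing that a separating Dehn twist has order exactly $p$ in $\Out(\pi_1(S_g)/\pi_g(k,p))$, and that disjoint separating twists are not accidentally related, requires careful book-keeping in the lower central series of $\pi_1(S_g)$ modulo $p$-th powers via the Malcev--Lazard correspondence. This is precisely where the hypotheses $k\geq 4$ and $\gcd(p,6)=1$ become indispensable: coprimality with $6$ guarantees that the Baker--Campbell--Hausdorff denominators arising in the passage to the graded Lie algebra are invertible modulo $p$, while $k\geq 4$ provides enough graded pieces for the commutator classes detecting distinct separating cycles to survive non-trivially.
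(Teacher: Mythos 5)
The paper does not prove this statement at all: Theorem~\ref{thm:monodromy} is quoted verbatim from Pikaart--de~Jong \cite[Thm.~3.1.3]{PJ}, so your proposal is an attempt to reprove an external result. Your framing is reasonable: reducing to (i) $T_{c_i}^p\in\Mod_g(k,p)$ and (ii) injectivity of $(\Z/p)^m\to\Out(\pi_1(S_g)/\pi_g(k,p))$ is the right skeleton, and (i) is correct --- it is exactly the computation of Lemma~\ref{lem:dehn} read modulo $\pi_1(S_g)^p\subset\pi_g(k,p)$.

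However, (ii) is where the entire content of the theorem lives, and your sketch of it is wrong in both cases. For non-separating vanishing cycles, disjointness does \emph{not} give linear independence of the classes $[c_i]$ in $H_1(S_g;\Z/p)$: two disjoint non-separating curves whose union separates (a bounding pair) satisfy $[c_1]=\pm[c_2]$, so the associated transvections coincide and the product $T_{c_1}T_{c_2}^{-1}$ lies in the Torelli group; such configurations occur as vanishing cycles of stable curves, and detecting them is precisely one of the delicate points in \cite{PJ}. For separating cycles, the twist $T_{c_i}$ lies in the Johnson kernel, hence acts trivially as an outer automorphism on $\pi_1(S_g)/\gamma_3(\pi_1(S_g))$; it therefore cannot be seen on the graded piece $\gamma_2/\gamma_3$ as you claim, and one must go at least to $\pi_1/\gamma_4$ (and control the interaction with $p$-th powers and with inner automorphisms, since the statement is in $\Out$ rather than $\Aut$). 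Your closing paragraph effectively concedes that this book-keeping --- which is where the hypotheses $k\geq 4$ and $\gcd(p,6)=1$ actually enter --- is not carried out. As it stands the argument establishes only the easy inclusion (the normal closure of the $T_{c_i}^p$ is contained in the kernel), not the equality asserted by the theorem; for the converse you should simply cite \cite[Thm.~3.1.3]{PJ} as the paper does, or reproduce their analysis of the boundary strata.
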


Before proving our result, we will need the following lemma:

\begin{lemma}\label{lem:normal}
Let $K<\Mod_g$ be a normal subgroup, with the property that  $T_c^n\in K$, for every simple closed curve $c$. 
Set $\langle\langle T_{c}^n\rangle\rangle_K$ for the normal subgroup of $K$ 
generated by the $n$-th powers of all Dehn twists. Then  $\langle\langle T_{c}^n\rangle\rangle_K= K\cap \Mod_g[n]$. 
\end{lemma}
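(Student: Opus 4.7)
The plan is to show that both $\langle\langle T_c^n\rangle\rangle_K$ and $K \cap \Mod_g[n]$ coincide with the ordinary subgroup $\langle \mathcal{T}_n \rangle$ generated (without any normal closure) by the set $\mathcal{T}_n := \{T_c^n : c \text{ a simple closed curve on } S_g\}$.

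The key observation is that $\mathcal{T}_n$ is already closed under conjugation by every element of $\Mod_g$, since $f T_c^n f^{-1} = T_{f(c)}^n \in \mathcal{T}_n$ for any $f \in \Mod_g$ and any simple closed curve $c$. Consequently, whenever $H$ is a subgroup of $\Mod_g$ that contains $\mathcal{T}_n$, the normal closure of $\mathcal{T}_n$ in $H$ collapses to the ordinary subgroup $\langle \mathcal{T}_n\rangle$, because every $H$-conjugate of an element of $\mathcal{T}_n$ already lies in $\mathcal{T}_n$. Applying this principle to $H = \Mod_g$ yields $\Mod_g[n] = \langle \mathcal{T}_n\rangle$, and applying it to $H = K$, which contains $\mathcal{T}_n$ by hypothesis, yields $\langle\langle T_c^n\rangle\rangle_K = \langle \mathcal{T}_n \rangle$.

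With these two identifications in place, the lemma follows at once. The hypothesis $\mathcal{T}_n \subseteq K$ forces $\Mod_g[n] = \langle\mathcal{T}_n\rangle \subseteq K$, so $K \cap \Mod_g[n] = \Mod_g[n]$, and this common group equals $\langle\langle T_c^n\rangle\rangle_K$ by what we just said. There is no real obstacle in this argument; the only conceptual point is recognizing that the generating family $\mathcal{T}_n$ is already $\Mod_g$-conjugation-invariant, so the distinction between \emph{subgroup generated} and \emph{normal subgroup generated} disappears both for $\Mod_g[n]$ inside $\Mod_g$ and for $\langle\langle T_c^n\rangle\rangle_K$ inside $K$, and the normality assumption on $K$ in $\Mod_g$ is not even needed for the argument.
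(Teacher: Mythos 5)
Your proof is correct and rests on the same key fact as the paper's, namely that $fT_c^nf^{-1}=T_{f(c)}^n$, so the generating set of $n$-th powers of twists is conjugation-invariant and the normal closure collapses to the ordinary subgroup generated. The paper phrases this as a two-inclusion argument (writing $x\in K\cap \Mod_g[n]$ as a product of conjugates $g_iT_{c_i}^{\varepsilon_i n}g_i^{-1}=T_{g_i(c_i)}^{\varepsilon_i n}$, each lying in $K$), whereas you package it by identifying both sides with $\langle \mathcal{T}_n\rangle$ and correctly note the stronger consequences that $K\cap\Mod_g[n]=\Mod_g[n]$ and that normality of $K$ is not needed.
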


\begin{proof}[Proof of Lemma \ref{lem:normal}]
We have $\langle\langle T_{c}^n\rangle\rangle_K\subset  K\cap \Mod_g[n]$.
For the reverse inclusion consider $x\in K\cap \Mod_g[n]$. Thus 
$x=\prod g_i T_{c_i}^{\varepsilon_i n}g_i^{-1}$, where $g_i\in \Mod_g$, 
$\varepsilon_i\in\{-1,1\}$. 
But $ g_i T_{c_i}^{\varepsilon_i n}g_i^{-1}= 
T_{g_i(c_i)}^{\varepsilon_i n}$. Recall that $T_{c}^n\in K$, for every 
simple closed curve $c$. Therefore 
$x$ is the product of the $n$-th powers of Dehn twists 
$T_{g_i(c_i)}^{\varepsilon_i n}$, each element being in $K$. 
This means that $x\in \langle\langle T_{c}^n\rangle\rangle_K$. 
\end{proof}

We are finally in a position for proving the main result of this section: 

\begin{proof}[Proof of Theorem \ref{thm:kahler}]
By Lemma \ref{lem:fox}  the group $\pi_1(_{\pi_g(k,p)}\overline{\mathcal M}_g^{an})$ is isomorphic to the quotient 
\[\Mod_g(k,p)/\langle\langle T_{\gamma}^p\rangle\rangle_{\Mod_g(k,p)}\]
Using Lemma \ref{lem:normal} we have 
\[ \langle\langle T_{\gamma}^p\rangle\rangle_{\Mod_g(k,p)}=
\Mod_g(k,p)\cap \Mod_g[p]\]
so that 
\[ \pi_1(_{\pi_g(k,p)}\overline{\mathcal M}_g^{an})=\frac{\Mod_g(k,p)}{\Mod_g(k,p)\cap \Mod_g[p]}\]
is a subgroup of finite index in  $\frac{\Mod_g}{\Mod_g[p]}$. This proves the claim.
\end{proof}

\begin{proof}[Proof of Corollary \ref{cor:rank1}]
Since $\frac{\Mod_g}{\Mod_g[p]}$ is virtually K\"ahler, the results are consequences of corresponding statements of 
Carlson-Toledo \cite{CT} for lattices in $\SO(n,1)$, and of Simpson \cite{Simpson} for groups of Hodge type. 
\end{proof}

Before  proceeding with the proof we will need the following result due to Carlson-Toledo (see  \cite{CT}, Thm. 7.1 and Cor. 3.7):
\begin{theorem}[\cite{CT}]\label{thm:Carlson-Toledo}
Let  $F:\mathcal X\to \mathbb H^{n+1}_{\R}/\Lambda$ be a harmonic map from a compact K\"ahler manifold $\mathcal X$ to 
a hyperbolic space form $\mathbb H^{n+1}_{\R}/\Lambda$, where $\Lambda\subset SO(1,n)$, for some $n >2$, is a lattice.
Then the harmonic map $F$ factors either through a circle or else through a compact Riemann surface. 
\end{theorem}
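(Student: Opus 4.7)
The plan is to apply the Siu--Sampson Bochner identity to the harmonic map $F$. Since $\mathcal{X}$ is compact K\"ahler and the target has nonpositive sectional curvature, integrating the identity over $\mathcal{X}$ and invoking Stokes' theorem forces its two nonnegative contributions---a Hessian-type term and a curvature term---to vanish pointwise. Vanishing of the Hessian term says that $F$ is pluriharmonic; vanishing of the curvature term is a quadratic identity relating the $(1,0)$-differential $\partial F$ to the Riemann tensor of $\mathbb{H}^{n+1}_{\R}/\Lambda$ pulled back along $F$.

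Next, I would exploit the very rigid form of the constant-curvature tensor $R(X,Y,Z,W)=\langle X,W\rangle\langle Y,Z\rangle-\langle X,Z\rangle\langle Y,W\rangle$. Substituting this into the vanishing curvature expression and extending $\C$-bilinearly, one finds that for every pair $U,V\in T^{1,0}\mathcal{X}$ the bivector $\partial F(U)\wedge \partial F(V)$ is isotropic in $\Lambda^{2}(TN\otimes_{\R}\C)$. Since $n>2$, this isotropy, combined with the nondegeneracy of the complex bilinear extension of the hyperbolic metric, forces $\partial F$ to have complex rank at most $1$ at every point of $\mathcal{X}$.

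With this rank bound in hand, split into two cases. If $\partial F\equiv 0$, then $dF$ is the real part of a harmonic $1$-form on $\mathcal{X}$ valued in $F^{*}TN$; Hodge theory on $\mathcal{X}$, together with the asphericity of the target, yields a factorization of $F$ through $\R/\Z$, embedded as a closed geodesic in the hyperbolic quotient---this is the circle case. In the remaining case, $\mathrm{rank}_{\C}\partial F=1$ generically; then $\mathrm{Im}(\partial F)$ spans a holomorphic line subbundle of $F^{*}(TN\otimes\C)$ on the open set where $\partial F$ is nonzero, and the kernel of $dF$ defines, via pluriharmonicity, a holomorphic foliation on $\mathcal{X}$ whose Stein factorization produces a holomorphic map $\pi:\mathcal{X}\to \Sigma$ to a compact Riemann surface through which $F$ factors.

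The hardest step is the rank-reduction argument: the curvature of a \emph{real} hyperbolic space is weaker than the Hermitian bisectional curvatures under which Siu's original holomorphicity theorem is proved, so one must work harder to extract a rank-$1$ conclusion from Sampson's identity, and it is here that the dimensional hypothesis $n>2$ is essential. The subsidiary difficulty of organizing the rank-$1$ locus into a genuine fibration over a smooth curve---handling the singularities of $\partial F$ and identifying the base---is classical and is resolved by Remmert's proper mapping theorem together with the compactness of $\mathcal{X}$.
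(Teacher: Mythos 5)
This statement is not proved in the paper at all: it is quoted verbatim from Carlson--Toledo \cite{CT} (their Thm.~7.1 together with Cor.~3.7), so the only thing to compare your attempt against is the original argument. Your skeleton --- Siu--Sampson Bochner identity, hence pluriharmonicity plus pointwise vanishing of the curvature term, hence a rank bound on $\partial F$, hence a holomorphic foliation and a Stein factorization --- is indeed the Carlson--Toledo route. But the two steps you identify as the crux are both wrong as stated.

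First, the rank reduction. For a constant negative curvature target the curvature term that the Bochner identity kills is, for each pair $U,V\in T^{1,0}\mathcal X$ with $X=\partial F(U)$, $Y=\partial F(V)$, the quantity $\langle X,\bar X\rangle\langle Y,\bar Y\rangle-|\langle X,\bar Y\rangle|^{2}$, i.e.\ the \emph{Hermitian} norm squared of $X\wedge Y$ in $\Lambda^{2}(TN\otimes\C)$. This is nonnegative and vanishes iff $X$ and $Y$ are $\C$-proportional, so $\mathrm{rank}_{\C}\,\partial F\le 1$ follows from Hermitian Cauchy--Schwarz in one line, for every $n$. Your version --- that $X\wedge Y$ is isotropic for the \emph{complex-bilinear} extension of the metric, and that isotropy plus nondegeneracy plus $n>2$ forces rank $\le 1$ --- is not a valid deduction: $(e_{1}+ie_{2})\wedge(e_{3}+ie_{4})$ is a nonzero decomposable isotropic bivector as soon as $\dim_{\R}N\ge 4$, i.e.\ precisely in the range $n>2$ where you claim the hypothesis saves you. (The hypothesis $n>2$ plays no role in the harmonic-map analysis; it only matters for the group-theoretic application, since lattices in $SO(1,2)$ are surface groups and hence K\"ahler.) You flag this as the hardest step but supply no argument, and the mechanism you propose cannot supply one.

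Second, the case division. Since $N$ is a real manifold, $dF=2\,\mathrm{Re}\,\partial F$, so $\partial F\equiv 0$ forces $F$ to be constant --- it does not produce the circle case. Both the closed-geodesic (circle) case and the Riemann-surface case occur with $\mathrm{rank}_{\C}\partial F=1$; they are distinguished by whether the image line of $\partial F$ in $TN\otimes\C$ is the complexification of a real line (so $\mathrm{rank}_{\R}dF=1$ on the top-rank locus, image a closed geodesic, $F$ factoring through a circle via a harmonic $1$-form) or genuinely complex (so $\mathrm{rank}_{\R}dF=2$, and the kernel foliation fibers $\mathcal X$ holomorphically over a curve). As written, your trichotomy would classify every nonconstant $F$ as factoring through a Riemann surface and never produce the circle alternative. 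The final Stein-factorization step is standard and correctly sketched.
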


\begin{proof}[Proof of Corollary \ref{cor:homom}]
Let $J$ be a finite index K\"ahler subgroup of $\frac{\Mod_g}{\Mod_g[p]}$; for instance, after the proof of Theorem \ref{thm:kahler}, we may set 
$$J=\frac{\Mod_g(k,p)}{\Mod_g(k,p)\cap \Mod_g[p]}$$ Let $\mathcal X$ be a 
compact K\"ahler manifold with fundamental group $J$, and \[f:\frac{\Mod_g}{\Mod_g[p]}\to \Lambda\] a homomorphism 
into a lattice $\Lambda\subset SO(1,n)$, for some $n >2$.  Note that $f$ is 
induced by a map $F:\mathcal X\to \mathbb H^{n+1}_{\R}/\Lambda$ into a hyperbolic space form. 
Eells-Sampson \cite{ES} proved that then the map $F$ could be assumed to be 
a harmonic map.  Further,  by Theorem  \ref{thm:Carlson-Toledo} the 
harmonic map $F$ factors  through a circle or a compact Riemann surface. 
Thus $f$ factors through $\Z$ or through $\pi_1(\Sigma_h)$. 
In the first case $f(J)$ is either trivial, or isomorphic to $\Z$.  In the second case, the subgroup $f(J)\subset \pi_1(\Sigma_h)$ is finitely generated and hence also
a surface group, by a result of Scott \cite{Scott}. 

Now, if $J$ were rationally perfect, then  $f(J)$ 
would be finite, by a result of Scott \cite{Scott}, since surface groups with finite abelianization 
are trivial. A recent result of Ershov-He (\cite{EH}, Thm. 1.9) shows that $H_1(J;\Q)=0$ for any finite index 
subgroup $J\subset \Mod_g$ with the property that $J\subset \gamma_k \mathcal{I}_g$ and $4g \geq 8 k-4$, where 
$\mathcal{I}_g$ denotes the Torelli group. In particular, this is so if $J$ contains the $k$-th Johnson subgroup
\[ \ker\left(\Mod_g \to \Out\left(\frac{\pi_1(S_g)}{\gamma_{k+1}(\pi_1(S_g))}\right)\right)\] 
Note that $\Mod_g(k,p)$ contains the $k$-th Johnson subgroup, by definition. 
\end{proof}

\begin{remark}
All examples worked out in \cite[Section 3]{BP} provide finite ramified coverings with finite or trivial 
$\pi_1(_G\overline{\mathcal M}_g^{an})$. 
\end{remark}

\begin{remark}
The same method yields (many) other virtually K\"ahler quotients of $\Mod_g$. For instance we could take 
for odd $p$ the normal subgroup generated by $p$-th powers of separating bounded simple closed curves of genus $1$ and 
the $3p$-th powers of other Dehn twists, see (\cite{BP}, Prop. 2.8, $k=7$) and also 
(\cite{Boggi}, Prop. 3.12) for other possibilities.
\end{remark}

\section{Abelianization of finite index subgroups}

Finally, we prove Theorem \ref{prop:abelianization}: 

\begin{proof}[Proof of Theorem \ref{prop:abelianization}]
As before, let $T_{c}$ denote the  Dehn twist along the simple closed curve $c$. 
By hypothesis $T_{c}^n\in K$. Let $p:K\to H_1(K, \mathbb{Q})$ be 
the abelianization map.  
According to Bridson \cite{Br} and Putman \cite{Pu}
we have  $p(T_{c}^n)=0$, which implies that $p$ factors through $K/\langle\langle T_{c}^n\rangle\rangle_K$.  
Observe now
that the inclusion $K\to \Mod_g$ induces an injective homomorphism 
\[ K(n)=\frac{K}{\langle\langle T_{c}^n\rangle\rangle_K}=\frac{K}{K\cap\Mod_g[n]}\hookrightarrow \frac{\Mod_g}{\Mod_g[n]}\]
Moreover, we also have that 
\[ \left[\frac{\Mod_g}{\Mod_g[n]}:K(n)\right]=
[\Mod_g: K]=n,\] and we are done.
 \end{proof}

Finally, we remark that the normality assumption for $K$ in Proposition \ref{prop:abelianization} is not essential. 
In fact, we have: 

\begin{proposition}
Suppose there exists $K\subset \Mod_g$ of finite index 
with  $\dim H_1(K;\Q) >0$, then there exists a normal subgroup 
$H\subset\Mod_g $, also of finite index, with $\dim H_1(H;\Q) >0$. 
\end{proposition}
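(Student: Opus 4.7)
My plan is to take the normal core of $K$ and then use a transfer argument to transport the nonvanishing of rational $H_1$ from $K$ down to this core.

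More precisely, first I would set
\[
H \;=\; \bigcap_{f \in \Mod_g} f K f^{-1},
\]
the normal core of $K$ in $\Mod_g$. Since $K$ has finite index, it has only finitely many conjugates, so $H$ is the intersection of finitely many finite-index subgroups and is therefore itself of finite index in $\Mod_g$; by construction it is normal. Note that $H \subset K$, so $H$ has finite index in $K$ as well.

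The key step is then to show $\dim H_1(H;\Q) \geq \dim H_1(K;\Q) > 0$. For this I would invoke the standard transfer (corestriction) map in group homology with rational coefficients: for the finite-index inclusion $H \hookrightarrow K$ there is a transfer homomorphism
\[
\tau: H_1(K;\Q) \longrightarrow H_1(H;\Q)
\]
such that the composition with the map $i_*: H_1(H;\Q) \to H_1(K;\Q)$ induced by inclusion equals multiplication by $[K:H]$. Over $\Q$ this composition is an isomorphism, and in particular $\tau$ is injective. Hence
\[
\dim H_1(H;\Q) \;\geq\; \dim H_1(K;\Q) \;>\; 0,
\]
as required.

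I do not anticipate a serious obstacle: the normal core construction is elementary and the transfer/corestriction inequality for rational Betti numbers of finite-index subgroups is classical. The only mild subtlety worth stating explicitly is that, a priori, passing to a smaller finite-index subgroup could kill $H_1(\,\cdot\,;\Q)$, which is why the transfer (rather than mere restriction of cycles) is needed.
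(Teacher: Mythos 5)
Your argument is correct and is essentially the paper's own proof: the paper likewise takes a finite-index normal subgroup $H$ of $\Mod_g$ contained in $K$ (your normal core is the canonical choice) and invokes the transfer map to see that $H_1(H;\Q)\to H_1(K;\Q)$ is surjective over $\Q$, hence $\dim H_1(H;\Q)\geq \dim H_1(K;\Q)>0$. Your phrasing via injectivity of the corestriction $\tau$ is equivalent to the paper's phrasing via surjectivity of $i_*$, both following from $i_*\circ\tau=[K:H]\cdot\mathrm{id}$.
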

\begin{proof}
It is known that if $H\subset K$ is a subgroup of finite index then 
the map $H_1(H,\Q)\to H_1(K,\Q)$ is surjective. This follows from the 
existence of the tranfer map in homology (see \cite{Pu}, Lemma 2.1). 
It suffices now to consider $H$ a normal subgroup of $\Mod_g$ contained in $K$. 
\end{proof}

\bigskip


\bigskip

\end{document}